\definecolor{MyLinkColor}{rgb}{0,0,0.4}
\newtheorem{theorem}{Theorem}[section]
\newtheorem{corollary}[theorem]{Corollary}
\newtheorem{lemma}[theorem]{Lemma}
\newtheorem{proposition}[theorem]{Proposition}
\numberwithin{equation}{section}
\begin{document}
\title[Bounded weak solutions to the thin film Muskat problem]{Bounded weak solutions to the thin film Muskat problem via an infinite family of Liapunov functionals}
\thanks{}
\author{Philippe Lauren\c{c}ot}
\address{Institut de Math\'ematiques de Toulouse, UMR~5219, Universit\'e de Toulouse, CNRS \\ F--31062 Toulouse Cedex 9, France}
\email{laurenco@math.univ-toulouse.fr}
\author{Bogdan-Vasile Matioc}
\address{Fakult\"at f\"ur Mathematik, Universit\"at Regensburg \\ D--93040 Regensburg, Deutschland}
\email{bogdan.matioc@ur.de}

\keywords{degenerate parabolic system - cross-diffusion - boundedness - Liapunov functionals - global existence}
\subjclass{35K65 - 35K51 - 37L45 - 35B65 - 35Q35}

\date{\today}

\begin{abstract}
A countably infinite family of Liapunov functionals is constructed for the thin film Muskat problem, which is a second-order degenerate parabolic system featuring cross-diffusion. More precisely, for each $n\geq 2$ we construct an homogeneous polynomial of degree $n$, which  is convex on $[0,\infty)^2$, with the property that its integral is a Liapunov functional for the problem. Existence of global bounded non-negative weak solutions is then shown in one space dimension.
\end{abstract}

\maketitle

%
%
\pagestyle{myheadings}
\markboth{\sc{Ph.~Lauren\c cot \& B.-V.~Matioc}}{\sc{Bounded weak solutions to the thin film Muskat problem}}

\section{Introduction}\label{sec01}

The thin film Muskat problem describes the dynamics of the respective heights of two immiscible fluids with different densities $(\rho_-,\rho_+)$ and viscosities $(\mu_-,\mu_+)$ in a porous media and reads
\begin{subequations}\label{tfm1}
\begin{align}
	\partial_t f & = \mathrm{div}\left(f \nabla\left[ (1+R) f + R g \right] \right) \;\text{ in }\; (0,\infty)\times \Omega\,, \label{tfm1a} \\
	\partial_t g & = \mu R\, \mathrm{div}\left[ g\nabla (f + g) \right] \;\text{ in }\; (0,\infty)\times \Omega\,, \label{tfm1b}
\end{align}
supplemented with homogeneous Neumann boundary conditions
\begin{equation}
	\nabla f\cdot \mathbf{n} = \nabla g\cdot \mathbf{n} = 0 \;\text{ on }\; (0,\infty)\times \partial\Omega\,, \label{tfm1c}
\end{equation}
and non-negative initial conditions
\begin{equation}
	(f,g)(0) = (f^{in},g^{in}) \;\text{ in }\; \Omega\,. \label{tfm1d}
\end{equation}
\end{subequations}
In \eqref{tfm1}, $\Omega$ is a bounded domain of $\mathbb{R}^d$, $d\ge 1$, with smooth boundary $\partial\Omega$, $f$ and $g$ denote the heights of the heavier and lighter fluids, 
respectively, and $R:=\rho_+/(\rho_- - \rho_+)$ and $\mu:=\mu_-/\mu_+$ are positive parameters depending solely on the densities ($\rho_- > \rho_+$) and viscosities of the two fluids. We recall that \eqref{tfm1} can be derived from the classical Muskat problem by a lubrication approximation \cite{EMM2012, JM2014, WM2000}. 

From a mathematical viewpoint, the system~\eqref{tfm1} is a quasilinear degenerate parabolic system with full diffusion matrix and it is well-known that the analysis of cross-diffusion systems is in general rather involved. Fortunately, as noticed in \cite{EMM2012}, an important property of \eqref{tfm1} is the availability of an energy functional. Specifically, 
\begin{equation}
	\mathcal{E}(f,g) := \frac{1}{2} \int_\Omega \left[ f^2 + R (f+g)^2 \right]\ \mathrm{d}x \label{in1}
\end{equation}
is a non-increasing function of time along the trajectories of \eqref{tfm1}. 
In fact, a salient feature of~\eqref{tfm1}, first observed in~\cite{LM2013}, is that it has, at least formally, a gradient flow structure for the energy~$\mathcal{E}$ with respect to the $2$-Wasserstein distance. 
This structure provides in particular a variational scheme to establish the existence of weak solutions to \eqref{tfm1}. 
Furthermore, as first noticed in \cite{ELM2011} and subsequently used in \cite{ACCL2019, LM2013}, the entropy functional
\begin{equation}
	\mathcal{H}(f,g) := \int_\Omega \left[ L(f) + \frac{1}{\mu} L(g) \right]\ \mathrm{d}x\,, \label{in2}
\end{equation}
with $L(r) := r\ln{r}-r+1\ge 0$, $r\ge 0$, is also a non-increasing function of time along the trajectories of \eqref{tfm1}. 

Building upon the above mentioned properties, the existence of non-negative global weak solutions~${(f,g)}$ to~\eqref{tfm1} satisfying
\begin{equation}
	(f,g) \in L_\infty((0,T),L_2(\Omega,\mathbb{R}^2))\cap L_2((0,T),H^1(\Omega,\mathbb{R}^2))\,, \label{in3}
\end{equation}
is shown in \cite{ELM2011, LM2017} in one space dimension $d=1$, see also \cite{AIJM2018, BGB2019} for the existence of global weak solutions to a generalized version of \eqref{tfm1} in the $d$-dimensional torus with periodic boundary conditions instead of the homogeneous Neumann boundary conditions~\eqref{tfm1c}.
 Local existence and uniqueness of classical solutions to \eqref{tfm1} with positive initial data are reported in \cite{EMM2012}, along with the local stability of spatially uniform steady states. 
 As for the Cauchy problem when $\Omega=\mathbb{R}^d$ and~${d\in \{1,2\}}$, non-negative global weak solutions are constructed in \cite{ACCL2019,  LM2013}
  for non-negative initial conditions~${(f^{in},g^{in})\in L_1(\mathbb{R}^d,\mathbb{R}^2)\cap L_2(\mathbb{R}^d,\mathbb{R}^2)}$ with finite second moments, exploiting the aforementioned gradient flow structure to set up a variational scheme, see also \cite{La2017} for an extension to a multicomponent version of~\eqref{tfm1} in one space dimension. 
  This approach is further developed in \cite{ZM2015} to investigate the existence of non-negative global weak solutions to a broader class of quasilinear cross-diffusion systems.

In view of \eqref{in3}, weak solutions to \eqref{tfm1} have rather low regularity. 
It is actually a general feature of cross-diffusion systems that classical regularity is hard to reach. In particular, the cross-diffusion structure impedes the use of bootstrap arguments which have proved efficient for triangular systems.
 A different route to improved regularity is to look for additional estimates and the purpose of this paper is to derive (formally) $L_n$-estimates for solutions to~\eqref{tfm1} for all integers $n\ge 2$, 
 eventually leading to $L_\infty$-estimates in the limit $n\to\infty$. 
 This feature paves the way to the construction of non-negative global bounded weak solutions to~\eqref{tfm1} but, as explained below,  
 we are only able to complete this construction successfully in one space dimension $d=1$.
  The first main contribution of this paper is actually to show that, for each $n\ge 2$, there is an homogeneous polynomial $\Phi_n$ of degree~$n$, which is non-negative and convex on $[0,\infty)^2$, and such that
\begin{equation}
	u=(f,g) \longmapsto \int_\Omega \Phi_n(u)\ \mathrm{d}x
\end{equation}
is a Liapunov functional for \eqref{tfm1}. 
More precisely, the first main result of this paper is the following.

\begin{theorem}\label{MainTh}
	Let $R>0$, $\mu>0$, and $u^{in} := (f^{in},g^{in})\in L_{\infty,+}(\Omega,\mathbb{R}^2)$. 
	If $u=(f,g)$ is a sufficiently regular  solution to \eqref{tfm1} on $[0,\infty)$ with non-negative components, then
	\begin{itemize}
	 \item[(l1)] for all $t\ge 0$, 
	\begin{equation}
		\int_\Omega \Phi_1(u(t,x))\ \mathrm{d}x + \int_0^t \int_\Omega \left[ |\nabla f|^2 + R |\nabla (f+g)|^2 \right](s,x)\ \mathrm{d}x\mathrm{d}s \le \int_\Omega \Phi_1(u^{in}(x))\ \mathrm{d}x\,, \label{p3}
	\end{equation}
	where $\Phi_1(X) := L(X_1) + L(X_2)/\mu$, $X=(X_1,X_2)\in [0,\infty)^2$;\medskip
	\item[(l2)] for all $n\ge 2$ and all $t\ge 0$, 
	\begin{subequations}\label{p4}
		\begin{equation}
			\int_\Omega \Phi_n(u(t,x))\ \mathrm{d}x \le \int_\Omega \Phi_n(u^{in}(x))\ \mathrm{d}x\,, \label{p4a}
		\end{equation}
		where $\Phi_n$ is the homogeneous polynomial of degree $n$ given by
		\begin{equation}
			\Phi_n(X) := \sum_{j=0}^n a_{j,n} X_1^j X_2^{n-j}\,, \qquad X=(X_1,X_2)\in \mathbb{R}^2\,, \label{p4b}
		\end{equation}
		with $a_{0,n}:=1$, 
		\begin{equation} \label{p4c}
			\begin{split}
				a_{j,n} & := \binom{n}{j} \prod_{k=0}^{j-1} \frac{k +\alpha_{k,n}}{\alpha_{k,n}} > 0\,, \qquad  1\le j \le n\,, \\
				\alpha_{k,n} & := R [ k + \mu(n-k-1)]> 0\,, \qquad 0 \le k \le n-1\,.
			\end{split}
		\end{equation}
	\end{subequations}
	In addition, $\Phi_n$ is convex on $[0,\infty)^2$;\medskip
	\item[(l3)] for $t\ge 0$,
	\begin{equation}
		\|(f+g)(t)\|_\infty \le \frac{1+R}{R} \|f^{in}+g^{in}\|_\infty\,. \label{p5}
	\end{equation} 
	\end{itemize}
\end{theorem}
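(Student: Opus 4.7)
The plan is to establish (l1), (l2), and (l3) in turn. For (l1), the approach is standard: test \eqref{tfm1a} against $L'(f)=\ln f$ and \eqref{tfm1b} against $L'(g)/\mu=(\ln g)/\mu$, integrate over $\Omega$, and integrate by parts using \eqref{tfm1c}. The identity $f\nabla\ln f = \nabla f$ eliminates the factors of $f$ and $g$; adding the two identities and expanding $|\nabla(f+g)|^2$, the resulting dissipation matches $|\nabla f|^2 + R|\nabla(f+g)|^2$, from which \eqref{p3} follows upon integrating in time.

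For (l2), I would compute $\frac{d}{dt}\int_\Omega \Phi_n(u)\,\mathrm{d}x$ by the chain rule and the PDEs and integrate by parts, producing a non-positive dissipation of the form
\[
-\int_\Omega \bigl[A\,|\nabla f|^2 + B\,\nabla f\cdot\nabla g + C\,|\nabla g|^2\bigr]\,\mathrm{d}x,
\]
where $A$, $B$, $C$ are linear combinations of $f\,\partial_{ij}\Phi_n(u)$ and $g\,\partial_{ij}\Phi_n(u)$. The heart of the argument, and the main obstacle, is the pointwise sum-of-squares identity
\[
A\,|\nabla f|^2 + B\,\nabla f\cdot\nabla g + C\,|\nabla g|^2 = \sum_{k=0}^{n-1} a_{k,n}(n-k)\,f^k g^{n-1-k}\left|\tfrac{k+\alpha_{k,n}}{\sqrt{\alpha_{k,n}}}\nabla f + \sqrt{\alpha_{k,n}}\,\nabla g\right|^2,
\]
which reduces, after an index shift, to identifying the coefficient of $f^k g^{n-1-k}$ in each of $A$, $B$, $C$. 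The recursion $(k+1)a_{k+1,n} = (n-k)(k+\alpha_{k,n})\,a_{k,n}/\alpha_{k,n}$, read off from \eqref{p4c}, handles $A$ and $C$ at once, while $B$ assembles from four contributions that collapse to $2\,a_{k,n}(n-k)(k+\alpha_{k,n})$ upon invoking the identity $R[k+\mu(n-k-1)] = \alpha_{k,n}$. The algebraic design of $\alpha_{k,n}$ and $a_{j,n}$ is dictated precisely by this verification, and the careful bookkeeping of the four $B$-contributions is the delicate step. For the convexity claim, the diagonal Hessian entries have non-negative coefficients, so only the determinant bound $\partial_{11}\Phi_n\,\partial_{22}\Phi_n \ge (\partial_{12}\Phi_n)^2$ on $[0,\infty)^2$ remains; I would derive it from the termwise Newton-type inequality $(j+2)(n-j)\,a_{j,n}a_{j+2,n} \ge (j+1)(n-j-1)\,a_{j+1,n}^2$, which via the same recursion reduces to $(j+1)\alpha_{j,n} - j\alpha_{j+1,n} = R\mu(n-1) \ge 0$. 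Passing from this coefficient inequality to non-negativity of the determinant polynomial on $[0,\infty)^2$ uses the AM--GM observation that polynomials $P=\sum b_jt^j\ge 0$, $Q=\sum d_jt^j\ge 0$, $S=\sum c_jt^j$ with $b_jd_j\ge c_j^2$ satisfy $PQ-S^2\ge 0$ coefficientwise.

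Finally, for (l3), the plan is to sandwich $\Phi_n$ between two $n$-th powers:
\[
(X_1+X_2)^n \le \Phi_n(X_1,X_2) \le \Bigl(\tfrac{1+R}{R}X_1 + X_2\Bigr)^n, \qquad (X_1,X_2)\in[0,\infty)^2.
\]
The lower bound follows from $a_{j,n}\ge \binom{n}{j}$, immediate from \eqref{p4c} since each factor $(k+\alpha_{k,n})/\alpha_{k,n}$ is at least $1$. The upper bound reduces to $a_{j,n}\le \binom{n}{j}((1+R)/R)^j$, i.e.\ to $k/\alpha_{k,n}\le 1/R$ for all $k\le n-1$, which is immediate from $\alpha_{k,n} = R[k+\mu(n-k-1)]\ge Rk$. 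Combining with \eqref{p4a} and the trivial bound $\tfrac{1+R}{R}X_1+X_2 \le \tfrac{1+R}{R}(X_1+X_2)$ yields
\[
\int_\Omega (f+g)^n(t)\,\mathrm{d}x \le |\Omega|\Bigl(\tfrac{1+R}{R}\Bigr)^n\|f^{in}+g^{in}\|_\infty^n,
\]
so that $\|(f+g)(t)\|_n \le |\Omega|^{1/n}\,\tfrac{1+R}{R}\,\|f^{in}+g^{in}\|_\infty$; letting $n\to\infty$ produces \eqref{p5}.
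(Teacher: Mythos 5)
Your proposal is correct, and its overall skeleton --- differentiate $\int_\Omega\Phi_n(u)\,\mathrm{d}x$ along the flow, show the resulting dissipation is pointwise non-negative, sandwich $\Phi_n$ between $n$-th powers and let $n\to\infty$ for (l3), and test with logarithms for (l1) --- is the same as the paper's. Where you genuinely diverge is in how the central positivity is established. The paper proves that $S_n(X)=D^2\Phi_n(X)M(X)$ is a \emph{symmetric} positive semidefinite matrix on $[0,\infty)^2$, which forces it to compute $\det D^2\Phi_n$ and show its non-negativity through a rather long symmetrization of a double sum of coefficients. You instead exhibit the dissipation directly as a sum of squares,
\begin{equation*}
\langle D^2\Phi_n(u)M(u)\nabla u,\nabla u\rangle=\sum_{k=0}^{n-1}(n-k)\,a_{k,n}\,f^kg^{n-1-k}\Bigl|\tfrac{k+\alpha_{k,n}}{\sqrt{\alpha_{k,n}}}\nabla f+\sqrt{\alpha_{k,n}}\,\nabla g\Bigr|^2,
\end{equation*}
and I have verified that the three coefficient identities underlying this decomposition are exactly those generated by the recursion $(k+1)\alpha_{k,n}a_{k+1,n}=(n-k)(k+\alpha_{k,n})a_{k,n}$; in particular your four contributions to the cross coefficient do collapse to $2(n-k)(k+\alpha_{k,n})a_{k,n}$. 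This gives a termwise, manifestly non-negative dissipation without invoking the symmetry of $D^2\Phi_n M$ or the sign of $\det D^2\Phi_n$ at all, which is arguably more elementary and more transparent about why the specific $\alpha_{k,n}$ work. Similarly, your convexity argument --- the termwise Newton inequality $(j+2)(n-j)a_{j,n}a_{j+2,n}\ge(j+1)(n-j-1)a_{j+1,n}^2$, reduced via the recursion to $(j+1)\alpha_{j,n}-j\alpha_{j+1,n}=\mu R(n-1)\ge0$, combined with the AM--GM observation on coefficients --- is shorter than the paper's determinant expansion; the trade-off is that it only yields positive semidefiniteness of the Hessian, which suffices for the theorem, whereas the paper's finer computation gives strict definiteness away from the origin (needed elsewhere in its existence proof). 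Two small remarks: in (l1) you should test with $\ln(f+\eta)$ and $\ln(g+\eta)/\mu$ and let $\eta\to0$, as the paper does, since $\ln f$ is undefined where $f$ vanishes; and in (l3) your extra factor $|\Omega|^{1/n}$ tends to $1$, so the argument still delivers \eqref{p5}.
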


For $n=2$, Theorem~\ref{MainTh} gives $(a_{0,2},a_{1,2},a_{2,2})=(1,2,(1+R)/R)$. Therefore,
\begin{equation*}
	\Phi_2(X) = \frac{1+R}{R} X_1^2 + (X_1+X_2)^2\,, \qquad  X\in\mathbb{R}^2\,,
\end{equation*}
and 
\begin{equation*}
	\mathcal{E}(f,g) = \frac{R}{2} \int_\Omega \Phi_2((f,g))\ \mathrm{d}x\,,
\end{equation*}
so that we recover the time monotonicity of the energy from \eqref{p4a} with $n=2$.

\medskip

It seems worth pointing out that the availability of an infinite number of Liapunov functionals, 
eventually leading to $L_\infty$-estimates, seems rather seldom for cross-diffusion systems and
 that we are not aware of other systems sharing this feature. 
 Whether it is possible to extend the analysis performed in this paper to a broader class of cross-diffusion systems will be the subject of future research.

\medskip

To construct the family of polynomials $(\Phi_n)_{n\ge 2}$, we introduce $u=(f,g)$ and the mobility matrix 
\begin{equation}\label{moma}
	M(X) = (m_{jk}(X))_{1\le j,k\le 2} := 
	\begin{pmatrix}
		(1+R) X_1 & R X_1 \\
		\mu R X_2 & \mu R X_2
	\end{pmatrix}\,, \qquad X\in\mathbb{R}^2\,,
\end{equation}
so that \eqref{tfm1a}-\eqref{tfm1b} becomes
\begin{equation}
	\partial_t u - \sum_{i=1}^d \partial_i \left( M(u) \partial_i u \right) = 0 \;\text{ in }\; (0,\infty)\times \Omega\,. \label{in4}
\end{equation}
We then use the observation that, since $\Phi\in C^2(\mathbb{R}^2)$, \eqref{tfm1c}, \eqref{in4}, and the symmetry of the Hessian matrix $D^2\Phi(u)$ of $\Phi$ entail that
\begin{equation}
	\frac{d}{dt} \int_\Omega \Phi(u)\ \mathrm{d}x + \sum_{i=1}^d \int_\Omega \langle D^2\Phi(u) M(u) \partial_i u , \partial_i u \rangle\ \mathrm{d}x = 0\,. \label{in5}
\end{equation} 
It readily follows from \eqref{in5} that $\Phi$ provides a Liapunov functional for \eqref{in4} as soon 
as the matrix~${D^2\Phi(u) M(u)}$ is  symmetric and  positive semidefinite. 
Using the ansatz~\eqref{p4b} for $\Phi=\Phi_n$ and the explicit form of the matrix $M$, we then compute $D^2\Phi_n(u) M(u)$ and require that it is a symmetric matrix, thereby obtaining \eqref{p4c}. Direct computations then show that the polynomial thus obtained is actually non-negative and convex on $[0,\infty)^2$, see section~\ref{sec04}  and appendix~\ref{secapA}.

\medskip

Having uncovered the estimates~(l1)-(l3) at a somewhat formal level, it is tempting to exploit them to construct a bounded weak solution to \eqref{tfm1} endowed with these properties. The difficulty we face here is the construction of an appropriate approximation of \eqref{tfm1} having sufficiently smooth solutions for which the estimates~(l1)-(l3) remain valid. In particular, boundedness of solutions to the approximation seems to be required to be able to compute the integral of $\Phi_n(u)$. Unfortunately, we have yet been unable to design an approximation scheme producing bounded solutions while preserving the structure leading to~(l1)-(l3) that could work in arbitrary space dimensions and we only provide below the existence of a bounded weak solution to~\eqref{tfm1} in one space dimension $d=1$. 

\begin{theorem}\label{ThBWS}
	Let $R>0$, $\mu>0$, $u^{in} := (f^{in},g^{in})\in L_{\infty,+}(\Omega,\mathbb{R}^2)$, and assume that $d=1$ (so that~$\Omega$ is a bounded interval of $\mathbb{R}$).
	 Then, there is a bounded weak solution $u:=(f,g)$ to \eqref{tfm1} which satisfies:
	\begin{itemize}
	\item[(p1)] for each $T>0$, 
	\begin{equation}
		(f,g)\in L_{\infty,+}((0,T)\times \Omega,\mathbb{R}^2) \cap L_2((0,T),H^1(\Omega,\mathbb{R}^2))\cap W_2^1((0,T),H^1 (\Omega,\mathbb{R}^2)')\,; \label{p1}
	\end{equation}

	\item[(p2)] for all $\varphi\in H^1(\Omega)$ and $t\ge 0$,
	\begin{subequations}\label{p2}
		\begin{equation}
			\int_\Omega (f(t,x)-f^{in}(x)) \varphi(x)\ \mathrm{d}x  +\int_0^t \int_\Omega f(s,x) \partial_x\left[ (1+R) f + R g \right](s,x) \cdot \partial_x\varphi(x)\, \mathrm{d}x\mathrm{d}s =0 \label{p2a}
		\end{equation} 
		and
		\begin{equation}
			\int_\Omega (g(t,x)-g^{in}(x)) \varphi(x)\ \mathrm{d}x + \mu R \int_0^t \int_\Omega g(s,x) \partial_x\left( f + g \right)(s,x) \cdot \partial_x\varphi(x)\, \mathrm{d}x\mathrm{d}s =0\,; \label{p2b}
		\end{equation}
	\end{subequations} 
	
\item[(p3)] and the properties (l1), (l2), and (l3) stated in Theorem~\ref{MainTh}.
\end{itemize}
\end{theorem}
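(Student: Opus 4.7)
The plan is to construct $(f,g)$ as the limit of classical solutions $(f_\varepsilon,g_\varepsilon)$ to a non-degenerate approximation of \eqref{tfm1}, for which the formal Liapunov identity \eqref{in5} becomes rigorous, and then to pass to the limit using standard compactness arguments. For $\varepsilon\in(0,1)$, I would replace the mobility matrix $M(u)$ in \eqref{in4} by $M_\varepsilon(u):=M(u)+\varepsilon I_2$---equivalently, add artificial diffusion $\varepsilon\partial_x^2 f_\varepsilon$ and $\varepsilon\partial_x^2 g_\varepsilon$ to \eqref{tfm1a}-\eqref{tfm1b}---together with a smooth non-negative approximation $(f^{in}_\varepsilon,g^{in}_\varepsilon)$ of $u^{in}$ satisfying $\|(f^{in}_\varepsilon,g^{in}_\varepsilon)\|_\infty\le\|u^{in}\|_\infty$ and converging to $u^{in}$ in every $L_p(\Omega,\mathbb{R}^2)$, $p<\infty$. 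The resulting system is uniformly parabolic and quasilinear, so classical theory (Amann) yields a unique classical solution $u_\varepsilon$ on a maximal time interval, and a standard Stampacchia argument---multiplying each equation by the negative part of the corresponding component and integrating---shows that $u_\varepsilon$ remains non-negative.

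For this approximation, the calculation underlying \eqref{in5} produces
\begin{equation*}
\frac{d}{dt}\int_\Omega\Phi_n(u_\varepsilon)\,\mathrm{d}x+\int_\Omega\langle D^2\Phi_n(u_\varepsilon)M(u_\varepsilon)\partial_x u_\varepsilon,\partial_x u_\varepsilon\rangle\,\mathrm{d}x+\varepsilon\int_\Omega\langle D^2\Phi_n(u_\varepsilon)\partial_x u_\varepsilon,\partial_x u_\varepsilon\rangle\,\mathrm{d}x=0,
\end{equation*}
and since $u_\varepsilon\ge 0$ and $\Phi_n$ is convex on $[0,\infty)^2$ by Theorem~\ref{MainTh}, the extra term is non-negative. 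Consequently, (l1), (l2), and (l3) transfer to $u_\varepsilon$ uniformly in $\varepsilon$. In particular, (l3) combined with $f_\varepsilon,g_\varepsilon\ge 0$ yields a uniform $L_\infty$-bound on $u_\varepsilon$ which precludes finite-time blow-up, so $u_\varepsilon$ is global, while (l1) gives a uniform bound in $L_2((0,T),H^1(\Omega,\mathbb{R}^2))$. Testing the weak formulation of the regularized system with $H^1$-functions and using the $L_\infty$-bound on the mobility produces a uniform bound for $\partial_t u_\varepsilon$ in $L_2((0,T),H^1(\Omega)')$.

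The Aubin-Lions-Simon lemma then supplies a subsequence converging strongly in $L_2((0,T)\times\Omega,\mathbb{R}^2)$, weakly in $L_2((0,T),H^1(\Omega,\mathbb{R}^2))$, and weakly-$*$ in $L_\infty((0,T)\times\Omega,\mathbb{R}^2)$ to some limit $(f,g)$. These modes of convergence are sufficient to pass to the limit in the bilinear fluxes $f_\varepsilon\partial_x[(1+R)f_\varepsilon+Rg_\varepsilon]$ and $g_\varepsilon\partial_x(f_\varepsilon+g_\varepsilon)$ appearing in \eqref{p2}, and (l1)-(l3) for $(f,g)$ then follow by Fatou's lemma and weak (respectively weak-$*$) lower semicontinuity. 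The main obstacle is the design of the approximation itself: it must simultaneously preserve non-negativity of $u_\varepsilon$ (so that convexity of $\Phi_n$ on $[0,\infty)^2$ alone can be invoked), maintain the symmetric positivity of $D^2\Phi_n(u_\varepsilon)M_\varepsilon(u_\varepsilon)$ underlying the Liapunov calculation, and provide enough regularity to legitimize $\int_\Omega\Phi_n(u_\varepsilon)\,\mathrm{d}x$ and its time derivative. The perturbation $\varepsilon I_2$ meets these requirements, and the one-dimensional embedding $H^1(\Omega)\hookrightarrow C(\overline\Omega)$ both smooths classical existence theory and ensures that the bilinear fluxes lie in $L_2((0,T)\times\Omega)$; the same combination of properties is not available in higher dimensions, which is what forces the restriction $d=1$.
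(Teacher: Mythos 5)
Your route is genuinely different from the paper's: you keep continuous time and regularize the mobility to $M+\varepsilon I_2$, invoking Amann's quasilinear parabolic theory for classical solutions, whereas the paper discretizes in time by an implicit Euler scheme with step $\tau$, solves at each step a stationary elliptic system (via the symmetrizer $S$, Lax--Milgram and Schauder's fixed point theorem, Lemma~\ref{lem.ap1}), and then passes to the limit $\tau\to 0$ with a discrete Aubin--Lions argument. Unfortunately, your version has two genuine gaps, both located exactly at the point the paper flags as the main difficulty (the design of an approximation that is simultaneously smooth, non-negative, and compatible with the Liapunov structure).

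First, the non-negativity argument does not close as stated. Testing the $f_\varepsilon$-equation with $-f_{\varepsilon,-}$ produces, on the set $\{f_\varepsilon<0\}$, the cross term $R\,f_\varepsilon\,\partial_x g_\varepsilon\,\partial_x f_\varepsilon$ and the diagonal term $\bigl((1+R)f_\varepsilon+\varepsilon\bigr)|\partial_x f_\varepsilon|^2$; neither has a sign when $f_\varepsilon<0$, so the Stampacchia argument fails for $M_\varepsilon=M+\varepsilon I_2$. This is why the paper truncates the mobility at negative arguments, setting $M_\varepsilon(X)=\varepsilon I_2+M\bigl((X_{1,+},X_{2,+})\bigr)$ in \eqref{ex10}, so that the off-diagonal coefficients vanish identically where the corresponding component is negative (condition \eqref{ap10}); only then does the negative-part test function yield $\varphi_{1,+}=\varphi_{2,+}=0$. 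You need the same truncation (which a posteriori is inactive). Second, and more seriously, global existence of your classical solutions $u_\varepsilon$ does not follow from the uniform $L_\infty$-bound. For a quasilinear system with full (non-triangular) diffusion matrix, Amann's continuation criterion requires control of a H\"older or $W^1_p$ norm, not merely of $\|u_\varepsilon\|_\infty$; the absence of such gradient bounds for cross-diffusion systems is precisely the obstruction the Introduction alludes to, and it is the reason the paper replaces the parabolic problem by a chain of elliptic problems, for which existence is obtained by Lax--Milgram (made coercive through the symmetrizer $S$, since the quadratic form of $M_\varepsilon$ itself is not positive definite) and no parabolic regularity theory is needed. Unless you supply an $\varepsilon$-dependent gradient estimate guaranteeing global classical solvability of the regularized system, the approximating objects of your scheme may not exist on $[0,T]$. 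The remaining steps (transfer of (l1)--(l3) to $u_\varepsilon$ via the positive semidefiniteness of $D^2\Phi_n(u_\varepsilon)M(u_\varepsilon)+\varepsilon D^2\Phi_n(u_\varepsilon)$, the $\eta$-shifted entropy for (l1), Aubin--Lions compactness, and passage to the limit in the bilinear fluxes using strong $L_2$ and weak gradient convergence) are sound and parallel the paper's Sections~\ref{sec02}--\ref{sec03}.
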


A key ingredient in the proof of Theorem~\ref{ThBWS} is the continuous embedding of $H^1(\Omega)$ in $L_\infty(\Omega)$, which readily provides the above mentioned boundedness of solutions to the approximation of~\eqref{tfm1} designed below and is of course only available in one space dimension. Besides, we employ a rather classical approximation approach, relying on a time implicit Euler scheme 
with constant time step~${\tau\in (0,1)}$ for the time discretization and a suitable modification of the mobility matrix to a non-degenerate one.

As a consequence of Theorem~\ref{ThBWS} and of the estimate~\eqref{LUB}, we obtain uniform $L_\infty$-bounds for the height $f$ of the denser fluid in the regime where $R\to 0$ and $\mu$ is fixed. Such an estimate has been used recently in \cite[Corollary~1.4]{LM2021a} when performing the singular limit $R\to0$ (while~$\mu$ is kept fixed or~$\mu\to\infty$) in the thin film Muskat problem \eqref{tfm1} in order to recover the porous medium equation $\partial_t f=\mathrm{div}\big(f \nabla f\big)$ in the limit.

\begin{corollary}\label{Cor:1}
If $R\max\{1,\mu\}\in (0,1/(2e)]$, then the solution $u=(f,g)$ to \eqref{tfm1} from Theorem~\ref{ThBWS} satisfies
\begin{equation}\label{estaaa}
\|f(t)\|_\infty\leq  \big( 1 +  e\max\{1,\mu\} \big) \|f^{in}\|_\infty + \| g^{in}\|_\infty\,, \qquad t\geq0.
\end{equation}
\end{corollary}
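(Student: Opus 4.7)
The strategy is to combine Theorem~\ref{ThBWS} with the $L_\infty$-bound~\eqref{LUB}, which itself is obtained by passing to the limit $n\to\infty$ in the Liapunov estimate (l2) of Theorem~\ref{MainTh}, and then to simplify under the hypothesis $RM\leq 1/(2e)$, where $M := \max\{1,\mu\}$. Setting $A := \|f^{in}\|_\infty$ and $B := \|g^{in}\|_\infty$, I expect \eqref{LUB} to provide essentially the estimate $\|f(t)\|_\infty \leq A + eRMB$. Granting this, the corollary reduces to an elementary inequality check.

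To see how such a bound arises from (l2), note that $\Phi_n$ has non-negative coefficients, so $\Phi_n(X) \geq a_{n,n} X_1^n$ on $[0,\infty)^2$ and $\Phi_n$ is monotone-increasing on $[0,\infty)^2$. Hence (l2) yields
\begin{equation*}
a_{n,n}\|f(t)\|_n^n \leq \int_\Omega \Phi_n(u^{in})\,\mathrm{d}x \leq |\Omega|\,\Phi_n(A,B) = |\Omega|\sum_{j=0}^n a_{j,n} A^j B^{n-j}\,.
\end{equation*}
Using $\alpha_{k,n} = R[k + \mu(n-k-1)]\leq RMn$ and the monotonicity of $\alpha\mapsto \alpha/(k+\alpha)$, we bound $a_{j,n}/a_{n,n}\leq \binom{n}{j}(RMn)^{n-j}\Gamma(j+RMn)/\Gamma(n+RMn)$. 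Stirling's formula then yields, with $j = xn$ and $x \in [0,1]$, the limiting $n$-th root
\begin{equation*}
\frac{A^x(eRMB)^{1-x}}{x^x(1-x)^{1-x}}\cdot \frac{(x+RM)^{x+RM}}{(1+RM)^{1+RM}}\,.
\end{equation*}
The last factor is $\leq 1$ since the map $t\mapsto t^t$ attains its maximum on the interval $[RM,1+RM]$ at the right endpoint. A direct optimization of the entropy-like quantity $A^x T^{1-x}/[x^x(1-x)^{1-x}]$ with $T:=eRMB$ over $x\in[0,1]$ yields maximum value $A+T$, attained at $x^\ast = A/(A+T)$. Since $\|f(t)\|_n\to\|f(t)\|_\infty$ (using that $f(t)\in L_\infty(\Omega)$ by Theorem~\ref{ThBWS}(p1)), we conclude $\|f(t)\|_\infty \leq A + eRMB$.

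Finally, the hypothesis $RM\leq 1/(2e)$ gives $eRMB\leq B/2 \leq B$, while $eM\geq 0$ gives $A\leq (1+eM)A$. Adding these estimates,
\begin{equation*}
\|f(t)\|_\infty \leq A + eRMB \leq A + B \leq (1+eM)A + B\,,
\end{equation*}
which is exactly~\eqref{estaaa}. The chief technical obstacle of this plan is the uniform-in-$j$ Stirling/Laplace asymptotic needed for the limit interchange in the middle step; the final simplification is immediate.
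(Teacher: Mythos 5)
Your overall architecture is the same as the paper's --- apply the Liapunov inequality (l2)/(p3), sandwich $\Phi_n$ between a multiple of $X_1^n$ from below and an explicit expression of the data from above, and let $n\to\infty$ --- but the two bounds you use are genuinely different from the paper's, and the harder one is left unproved. The paper's proof of Lemma~\ref{lelfb} bounds $a_{n,n}$ from below by comparing $\sum_{k=1}^{n-1}\ln\big(1+\tfrac{k}{R\max\{1,\mu\}(n-1)}\big)$ with an integral, obtaining $\Phi_n(X)\ge \nu_n X_1^n$ with the fully explicit $\nu_n$ of \eqref{LUB}; the corollary then follows from the elementary observation that $R\max\{1,\mu\}\le 1/(2e)$ forces $\nu_n> \tfrac12 (eR\max\{1,\mu\})^{1-n}$, combined with the crude upper bound $\Phi_n(X)\le R^{-n}[(1+R)X_1+RX_2]^n$ already recorded in \eqref{LUB}. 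This yields $\|f(t)\|_\infty\le e\max\{1,\mu\}\,\|(1+R)f^{in}+Rg^{in}\|_\infty$ and then \eqref{estaaa}, with no asymptotic analysis at all. You instead keep the full sum $\sum_j (a_{j,n}/a_{n,n})A^jB^{n-j}$, bound the coefficient ratios by Gamma-function quotients, and evaluate the $n$-th root by a Laplace-type optimization over $x=j/n$. If completed, this gives the sharper intermediate estimate $\|f(t)\|_\infty\le \|f^{in}\|_\infty+eR\max\{1,\mu\}\|g^{in}\|_\infty$ (which correctly reduces to the maximum principle when $g^{in}=0$), and your individual claims check out: the bound $\alpha_{k,n}\le R\max\{1,\mu\}n$, the monotonicity of $\alpha\mapsto\alpha/(k+\alpha)$, the limiting expression, the inequality $(x+RM)^{x+RM}\le(1+RM)^{1+RM}$ (valid because $RM<1$ so $(RM)^{RM}\le 1<(1+RM)^{1+RM}$), and the entropy optimization giving $A+T$.

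The gap you yourself flag is, however, real and is precisely where all the work lies: to conclude $\limsup_n\big[\sum_{j=0}^n (a_{j,n}/a_{n,n})A^jB^{n-j}\big]^{1/n}\le A+eRMB$ you need the Stirling estimate to hold uniformly over $0\le j\le n$, not just pointwise along $j=xn$ for fixed $x$. This is fixable with non-asymptotic bounds ($\binom{n}{j}\le (j/n)^{-j}((n-j)/n)^{-(n-j)}$ is exact, and the two Gamma arguments $j+RMn$ and $(1+RM)n$ both tend to infinity so the Stirling correction factors are uniformly controlled), but as written the interchange of limits is asserted rather than proved. A second, minor point: the bound $\|f(t)\|_\infty\le A+eRMB$ does not follow from \eqref{LUB} as you initially suggest --- \eqref{LUB} only gives the weaker coefficient $\nu_n$ rather than the full termwise comparison you subsequently carry out --- so either complete the Gamma/Stirling argument rigorously or fall back on the paper's route, which reaches \eqref{estaaa} from \eqref{LUB} in a few lines.
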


\medskip

We provide the proof of Theorem~\ref{ThBWS} in sections~\ref{sec02} and ~\ref{sec03} below. It involves three steps: we begin with the existence of a weak solution to the implicit time discrete scheme associated to \eqref{tfm1} which satisfies a time discrete version of the estimates~\eqref{p4} and~\eqref{p5} (section~\ref{sec02}).
 This result is achieved with an approximation procedure which is designed and studied in section~\ref{sec02.1}, a technical lemma being postponed to appendix~\ref{secapB}. 
 The next section~\ref{sec02.2} is devoted to the time discrete version of the estimates~\eqref{p4} and~\eqref{p5}, the proof of the properties of the polynomials $\Phi_n$, $n\ge 2$, 
 being collected in appendix~\ref{secapA}. 
 The proof of Theorem~\ref{ThBWS} is given in section~\ref{sec03} and is based on a compactness method. We finally prove Theorem~\ref{MainTh} in section~\ref{sec04}.

\bigskip

\paragraph{\textbf{Notation.}} For $p\in [1,\infty]$, we denote by $\|\cdot\|_p$ the $L_p$-norm in $L_p(\Omega)$, $L_p(\Omega,\mathbb{R}^2) := L_p(\Omega)\times L_p(\Omega)$, and~$H^1(\Omega,\mathbb{R}^2):= H^1(\Omega)\times H^1(\Omega)$. The positive cone of a Banach lattice $E$ is denoted by $E_+$.
 Next,~${\mathbf{M}_2(\mathbb{R})}$ denotes the space of $2\times 2$ real-valued matrices, ${\mathbf{Sym}_2(\mathbb{R})}$ is the subset of ${\mathbf{M}_2(\mathbb{R})}$ consisting of symmetric matrices, and $\mathbf{SPD}_2(\mathbb{R})$ is the set of symmetric and positive definite matrices in $\mathbf{M}_2(\mathbb{R})$. Finally, we denote the positive part of a real number $r\in\mathbb{R}$ by~${r_+:=\max\{r,0\}}$ and $\langle \cdot,\cdot \rangle$ is the scalar product on $\mathbb{R}^2$.

\section{A time discrete scheme: $d=1$}\label{sec02}

Throughout this section, we assume that $d=1$ and $\Omega$ is a bounded open interval of $\mathbb{R}$.
In order to construct bounded non-negative global  weak solutions to the evolution problem~\eqref{tfm1} we introduce an implicit time discrete scheme, see \eqref{ex1a}-\eqref{ex1b}, 
as well as a regularized version of this scheme, see \eqref{ex12a}. The aim of this section is to prove the existence of a bounded weak solution to the implicit time discrete scheme, as stated in Proposition~\ref{prop.e1}.

\begin{proposition}\label{prop.e1}
Given  $\tau>0$ and $U=(F,G)\in L_{\infty,+}(\Omega,\mathbb{R}^2)$, there is a weak solution~$u=(f,g)$  with~$u\in H^1_+(\Omega,\mathbb{R}^2)$ to 
\begin{subequations}\label{ex1}
\begin{align}
	\int_\Omega \Big[ f \varphi + \tau f \partial_x\left[ (1+R) f + R g \right] \cdot\partial_x\varphi \Big]\ \mathrm{d}x & = \int_\Omega F \varphi\ \mathrm{d}x\,, \qquad \varphi\in H^1(\Omega)\,, \label{ex1a} \\
	\int_\Omega \Big[ g \psi + \tau \mu R g \partial_x(f + g)\cdot \partial_x\psi \Big]\ \mathrm{d}x & = \int_\Omega G \psi\ \mathrm{d}x\,, \qquad \psi\in H^1(\Omega)\,, \label{ex1b}
\end{align}
\end{subequations}
which also satisfies
\begin{equation}
	\int_\Omega \Phi_n(u)\ \mathrm{d}x \le \int_\Omega \Phi_n(U)\ \mathrm{d}x \label{ex2}
\end{equation}
for $n\ge 2$,
\begin{equation}
		\|f+g\|_\infty \le \frac{1+R}{R} \|F+G\|_\infty\,, \label{ex2a}
\end{equation}
 and
\begin{equation}
	\int_\Omega \Phi_1(u)\ \mathrm{d}x + \tau \int_\Omega \left[  |\partial_x f|^2 + R |\partial_x (f + g)|^2 \right]\ \mathrm{d}x \le \int_\Omega \Phi_1(U)\ \mathrm{d}x\,. \label{ex2b}
\end{equation}
\end{proposition}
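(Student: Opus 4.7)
The plan is to obtain the solution to~\eqref{ex1} as the limit of solutions to a family of regularized problems indexed by a small parameter $\varepsilon\in(0,1)$. The regularization consists in replacing the degenerate coefficients $f$ and $g$ appearing in the flux terms of~\eqref{ex1a}--\eqref{ex1b} by non-degenerate surrogates such as $f^+ + \varepsilon$ and $g^+ + \varepsilon$, thereby producing a uniformly elliptic quasilinear system on $H^1(\Omega,\mathbb{R}^2)$. Existence of a weak solution $u_\varepsilon=(f_\varepsilon,g_\varepsilon)\in H^1(\Omega,\mathbb{R}^2)$ to this regularized problem can be established by a Leray-Schauder fixed-point argument: freezing the unknown inside the mobility and solving the resulting linear problem via the Lax-Milgram lemma yields a continuous and compact map on $L_2(\Omega,\mathbb{R}^2)$, whose required uniform $L_2$-bound follows from a standard energy estimate.

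Once $u_\varepsilon$ is available I would derive $\varepsilon$-uniform versions of~\eqref{ex2}--\eqref{ex2b}. Non-negativity of $f_\varepsilon$ and $g_\varepsilon$ follows by testing each regularized equation against the corresponding negative part and using $F,G\ge 0$. The $L_\infty$-bound~\eqref{ex2a} on $f_\varepsilon + g_\varepsilon$ is obtained by a Stampacchia-type argument: testing an appropriate linear combination of~\eqref{ex1a} and~\eqref{ex1b} against $(f_\varepsilon+g_\varepsilon - K)_+$ with $K:=(1+R)\|F+G\|_\infty/R$ yields, after integration by parts and use of the sign of $F+G-K$, an identity whose dissipation has the favorable sign and whose left-hand side forces $(f_\varepsilon+g_\varepsilon-K)_+\equiv 0$. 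The entropy estimate~\eqref{ex2b} is established in analogy with the continuous case by testing with (suitably regularized) logarithms of $f_\varepsilon$ and $g_\varepsilon$ and invoking the convexity of $L$. The crucial estimate~\eqref{ex2} is derived by testing~\eqref{ex1a} and~\eqref{ex1b} against $\partial_{X_1}\Phi_n(u_\varepsilon)$ and $\partial_{X_2}\Phi_n(u_\varepsilon)$ respectively; these functions lie in $H^1(\Omega)$ thanks to the embedding $H^1(\Omega)\hookrightarrow L_\infty(\Omega)$ available in $d=1$ together with the a priori $L_\infty$-bound~\eqref{ex2a} on $u_\varepsilon$. Summing the two identities and exploiting the convexity of $\Phi_n$ via
\[
\int_\Omega\Phi_n(u_\varepsilon)\,\mathrm{d}x - \int_\Omega\Phi_n(U)\,\mathrm{d}x \le \int_\Omega \langle \nabla\Phi_n(u_\varepsilon), u_\varepsilon - U \rangle\,\mathrm{d}x,
\]
the remaining terms integrate by parts to $\tau$ times a quadratic form involving $D^2\Phi_n(u_\varepsilon)M_\varepsilon(u_\varepsilon)$. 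By the very construction of $\Phi_n$ in~\eqref{p4c} (see also section~\ref{sec04}), this matrix is symmetric and positive semidefinite on $[0,\infty)^2$, so the dissipation has the favorable sign and~\eqref{ex2} follows.

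To close the argument I would pass to the limit $\varepsilon\to 0$. The uniform $L_\infty$-bound on $u_\varepsilon$, combined with the $H^1$-bound coming from~\eqref{ex2b} and the compact embedding $H^1(\Omega)\hookrightarrow C(\bar\Omega)$ available for $d=1$, yields a subsequence of $(u_\varepsilon)$ converging uniformly on $\bar\Omega$, and weakly in $H^1(\Omega,\mathbb{R}^2)$, to a non-negative limit $u=(f,g)\in H^1_+(\Omega,\mathbb{R}^2)$. Uniform convergence suffices to identify the nonlinear flux terms in~\eqref{ex1a}--\eqref{ex1b}, while weak lower semicontinuity of the convex integrals $\int\Phi_n(\cdot)\,\mathrm{d}x$ and of the Dirichlet-type energy preserves~\eqref{ex2} and~\eqref{ex2b} in the limit; the $L_\infty$-bound~\eqref{ex2a} transfers directly via uniform convergence.

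The main obstacle is to design the regularization so that the algebraic identity making $D^2\Phi_n\,M_\varepsilon$ symmetric and positive semidefinite is preserved at the $\varepsilon$-level. Any modification of the off-diagonal entries of the mobility matrix risks spoiling this property and hence the $L_n$-Liapunov estimate, so the regularization must act only on the degenerate diagonal factors while leaving the cross-diffusion structure of $M$ intact. A secondary technical point is that the $L_\infty$-bound~\eqref{ex2a} must be established \emph{before} the test functions $\partial_{X_j}\Phi_n(u_\varepsilon)$ are used, since the admissibility of the latter as $H^1$-functions hinges on the a priori $L_\infty$-control of $u_\varepsilon$.
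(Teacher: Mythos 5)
Your overall architecture (regularize, fixed point via Lax--Milgram and the symmetrizer, test with $D\Phi_n$, entropy estimate with regularized logarithms, compactness as $\varepsilon\to0$) is the paper's, but the two places where the real work lies are left open or go wrong. First, the regularization. You propose replacing $f,g$ in the fluxes by $f^++\varepsilon$, $g^++\varepsilon$, i.e.\ $M_\varepsilon(X)=M(X_++\varepsilon\mathbf{1})$, and you yourself identify as ``the main obstacle'' that this may destroy the symmetry and positive semidefiniteness of $D^2\Phi_n\,M_\varepsilon$ --- but you never resolve it. It does destroy it: since $M$ is linear in its argument, $D^2\Phi_n(X)M(X_++\varepsilon\mathbf{1})=D^2\Phi_n(X)M(X_+)+\varepsilon\,D^2\Phi_n(X)M(\mathbf{1})$, and for $n\ge3$ the perturbation $D^2\Phi_n(X)M(\mathbf{1})$ is neither symmetric nor of controlled sign (the symmetry of $D^2\Phi_n(Y)M(Y)$ in Proposition~\ref{prlfa} is an identity tied to the two arguments coinciding). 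The paper's choice is instead $M_\varepsilon(X):=\varepsilon I_2+M(X_+)$, see \eqref{ex10}: then $D^2\Phi_n\,M_\varepsilon=\varepsilon D^2\Phi_n+D^2\Phi_n\,M$, and \emph{both} summands are symmetric positive semidefinite on $[0,\infty)^2$ (the first by convexity of $\Phi_n$, the second by Proposition~\ref{prlfa}(b)), while $SM_\varepsilon$ is uniformly elliptic so that Lemma~\ref{lem.ap1} applies. This is the one idea your proof is missing.

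Second, the $L_\infty$-bound \eqref{ex2a}. Your Stampacchia argument on $h:=f_\varepsilon+g_\varepsilon$ does not go through as stated: adding the two equations, the flux is $f\partial_x f+(Rf+\mu Rg)\partial_x h$, and on testing with $(h-K)_+$ the term $\int_{\{h>K\}}f\,\partial_x f\,\partial_x h\,\mathrm{d}x$ has no sign, so ``the dissipation has the favorable sign'' is unjustified; the constant $(1+R)/R>1$ already signals that \eqref{ex2a} is not a maximum principle for $f+g$. The paper obtains \eqref{ex2a} as a \emph{consequence} of the $\Phi_n$-estimates: by Lemma~\ref{lelfb},
\begin{equation*}
\|f+g\|_n\le\Big(\int_\Omega\Phi_n(u)\,\mathrm{d}x\Big)^{1/n}\le\Big(\int_\Omega\Phi_n(U)\,\mathrm{d}x\Big)^{1/n}\le\frac{1+R}{R}\,\|F+G\|_n\,,
\end{equation*}
and one lets $n\to\infty$. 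This also removes your ``secondary technical point'': no a priori $L_\infty$-bound is needed to admit $D\Phi_n(u_\varepsilon)$ as a test function, since $u_\varepsilon\in H^1(\Omega,\mathbb{R}^2)\hookrightarrow L_\infty(\Omega,\mathbb{R}^2)$ already in $d=1$; the logical order is $\Phi_n$-estimate first, $L_\infty$-bound second, the reverse of what you propose.
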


We fix $\tau>0$ and $U=(F,G)\in L_{\infty,+}(\Omega,\mathbb{R}^2)$.
 Recalling the definition~\eqref{moma} of the mobility matrix 
 \begin{equation*}
 	M(X) = 
 	\begin{pmatrix}
 		(1+R) X_1 & R X_1 \\
 		\mu R X_2 & \mu R X_2
 	\end{pmatrix}\,, \qquad X\in\mathbb{R}^2\,,
 \end{equation*}
an alternative formulation of \eqref{ex1} reads
\begin{equation}
	\int_\Omega \left[\langle u,v \rangle + \tau \langle M(u) \partial_x u , \partial_x v\rangle \right]\ \mathrm{d}x = \int_\Omega \langle U,v\rangle\ \mathrm{d}x\,, \qquad v\in H^1(\Omega,\mathbb{R}^2)\,. \label{ex3}
\end{equation}

Obviously, the mobility matrix $M(X)$ is in general not symmetric and the associated quadratic form 
\begin{equation*}
\mathbb{R}^2\ni\xi=(\xi_1,\xi_2)  \mapsto \sum_{j,k=1}^2 m_{jk}(X) \xi_j \xi_k\in \mathbb{R}
\end{equation*}
is not positive definite (even if $X\in[0,\infty)^2)$, two features which complicate the analysis concerning the solvability of \eqref{ex3}. Fortunately, as noticed in \cite{DGJ1997}, the underlying gradient flow structure provides a way to transform \eqref{ex3} to an elliptic system with symmetric and positive semidefinite matrix. 
 More precisely, we introduce the symmetric matrix $S$ with constant coefficients 
\begin{equation*}
	S := \begin{pmatrix}
		1+R & R \\
		R & R
	\end{pmatrix}\,,
\end{equation*} 
which is actually the Hessian matrix of $R\Phi_2/2$. Clearly, $S$ belongs to $\mathbf{SPD}_2(\mathbb{R})$ and
\begin{equation}
	\langle S \xi , \xi \rangle = \xi_1^2 + R \left( \xi_1 + \xi_2 \right)^2 \ge \frac{R}{1+2R} |\xi|^2\,, \qquad \xi\in\mathbb{R}^2\,. \label{ex4}
\end{equation}
Choosing $Sv$ instead of $v\in H^1(\Omega,\mathbb{R}^2)$ as a test function in \eqref{ex3} and using the symmetry of $S$,
 lead to another alternative formulation of \eqref{ex1a}-\eqref{ex1b} (or \eqref{ex3}), which reads
\begin{equation}
	\int_\Omega \left[ \langle Su,v \rangle + \tau \langle SM(u) \partial_x u , \partial_x v\rangle \right]\ \mathrm{d}x 
	= \int_\Omega \langle SU,v\rangle\ \mathrm{d}x\,, \qquad v\in H^1(\Omega,\mathbb{R}^2)\,. \label{ex5}
\end{equation}
We next observe that, for $X\in [0,\infty)^2$,
\begin{equation}
	SM(X) = \begin{pmatrix}
		(1+R)^2 X_1 + \mu R^2 X_2 & (1+R)R X_1 + \mu R^2 X_2 \\
		(1+R)R X_1 + \mu R^2 X_2 & R^2 X_1 + \mu R^2 X_2
	\end{pmatrix} \label{ex6a}
\end{equation}
and
\begin{equation}
	\begin{split}
	\langle SM(X)\xi,\xi\rangle & = X_1 ((1+R) \xi_1 + R \xi_2 )^2 + \mu R^2 X_2 (\xi_1+\xi_2)^2\geq 0\,.
	\end{split}\label{ex6b}
\end{equation}
Consequently, $SM(X)$ belongs to $\mathbf{SPD}_2(\mathbb{R})$ for all $X\in (0,\infty)^2$ and the formulation~\eqref{ex5} seems more appropriate to study the
 solvability of \eqref{ex1a}-\eqref{ex1b}. However, the matrix $SM(X)$ is still degenerate as $X_1\to 0$ or $X_2\to 0$, so that we first solve a regularized problem in the next section.  

\subsection{A regularization of the time discrete scheme}\label{sec02.1}

Let $\varepsilon\in (0,1)$ and define
\begin{equation}
	M_{\varepsilon}(X)  := (m_{\varepsilon, jk}(X))_{1\le j,k\le 2}:= \varepsilon I_2 + M((X_{1,+},X_{2,+}))\,, \qquad X\in\mathbb{R}^2\,. \label{ex10}
\end{equation}

\begin{lemma}\label{lem.ex2}
Given $\tau>0$, $U=(U_1,U_2)\in L_{\infty,+}(\Omega,\mathbb{R}^2)$, and $\varepsilon\in (0,1)$, there is a weak solution~${u_{\varepsilon} =(u_{1,\varepsilon},u_{2,\varepsilon})\in H^1_+(\Omega,\mathbb{R}^2)}$ to 
\begin{equation}
	\int_\Omega \left[ \langle u_{\varepsilon} , v \rangle + \tau \langle M_{\varepsilon}(u_{\varepsilon}) \partial_x u_{\varepsilon} , \partial_x v \rangle \right]\ \mathrm{d}x = \int_\Omega \langle U , v \rangle\ \mathrm{d}x\,, \qquad v\in H^1(\Omega,\mathbb{R}^2)\,. \label{ex12a}
\end{equation}
Moreover, 
\begin{equation}
	\| u_{1,\varepsilon} + u_{2,\varepsilon} \|_\infty \le \frac{1+R}{R} \|U_1+U_2\|_\infty\,, \label{ex12c}
\end{equation}
and, for $n\ge 2$,
\begin{equation}
	\int_\Omega \Phi_n(u_{\varepsilon})\ \mathrm{d}x \le \int_\Omega \Phi_n(U)\ \mathrm{d}x\,. \label{ex12b}
\end{equation}
\end{lemma}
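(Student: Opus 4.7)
The plan is to construct $u_\varepsilon$ via a Schauder fixed point on a closed ball of $H^1(\Omega,\mathbb{R}^2)$ (viewed compactly inside $L_2(\Omega,\mathbb{R}^2)$), relying on the symmetrization by $S$ introduced before~\eqref{ex5} to cast the frozen-coefficient problem as a coercive elliptic system amenable to Lax-Milgram's theorem; non-negativity and the two estimates~\eqref{ex12c},~\eqref{ex12b} are then extracted \emph{a posteriori} on the resulting fixed point.

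Concretely, for $w \in H^1(\Omega,\mathbb{R}^2)$ the continuous embedding $H^1(\Omega)\hookrightarrow L_\infty(\Omega)$ valid in dimension $d=1$ places $M_\varepsilon(w)$ in $L_\infty(\Omega,\mathbf{M}_2(\mathbb{R}))$; the bilinear form
\[
B_w(u,v) := \int_\Omega \bigl[\langle Su,v\rangle + \tau\langle SM_\varepsilon(w)\partial_x u,\partial_x v\rangle\bigr]\,\mathrm{d}x
\]
is thereby bounded on $H^1(\Omega,\mathbb{R}^2)$ and, via $SM_\varepsilon(w) = \varepsilon S + SM((w_{1,+},w_{2,+}))$ combined with~\eqref{ex4} and~\eqref{ex6b}, coercive with constant $\varepsilon R/(1+2R)$ independent of $w$. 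Lax-Milgram then yields a unique $T(w) \in H^1(\Omega,\mathbb{R}^2)$ solving the symmetrized analogue of~\eqref{ex12a} (equivalent to~\eqref{ex12a} by invertibility of $S$), and choosing $v = T(w)$ produces an a priori bound $\|T(w)\|_{H^1} \le \rho_0 = \rho_0(\varepsilon,\tau,R,\mu,\|U\|_2)$ independent of $w$. Hence $T$ maps the closed $H^1$-ball of radius $\rho_0$ into itself; this ball is convex and $L_2$-compact by Rellich, and $T$ is $L_2$-continuous on it since any $L_2$-convergent sequence bounded in $H^1$ subconverges uniformly in $C(\overline\Omega)$ (via the compact embedding $H^1(\Omega)\hookrightarrow\hookrightarrow C(\overline\Omega)$ valid in one space dimension), yielding uniform convergence of the matrix coefficient $M_\varepsilon(\cdot)$. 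Schauder's theorem then delivers a fixed point $u_\varepsilon$ solving~\eqref{ex12a}.

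Non-negativity of $u_\varepsilon$ follows by testing~\eqref{ex12a} with $v = -(u_{\varepsilon,1,-}, u_{\varepsilon,2,-})$: the pointwise identities $u_{\varepsilon,j,+}\,\partial_x u_{\varepsilon,j,-} = 0$ a.e.\ (the positive and negative parts of an $H^1$ function have essentially disjoint supports, on each of which the gradient of the other vanishes) annihilate every coupling contribution to $\int_\Omega\langle M_\varepsilon(u_\varepsilon)\partial_x u_\varepsilon,\partial_x v\rangle\,\mathrm{d}x$, leaving
\[
\int_\Omega \bigl[u_{\varepsilon,1,-}^2 + u_{\varepsilon,2,-}^2 + \tau\varepsilon\bigl(|\partial_x u_{\varepsilon,1,-}|^2 + |\partial_x u_{\varepsilon,2,-}|^2\bigr)\bigr]\,\mathrm{d}x = -\int_\Omega \bigl(U_1 u_{\varepsilon,1,-} + U_2 u_{\varepsilon,2,-}\bigr)\,\mathrm{d}x \le 0,
\]
which forces $u_\varepsilon \ge 0$. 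For~\eqref{ex12b} I test~\eqref{ex12a} with $v := \nabla\Phi_n(u_\varepsilon) \in H^1(\Omega,\mathbb{R}^2)$ (well-defined since $u_\varepsilon \in L_\infty$ and $\nabla\Phi_n$ is polynomial): the coefficients $a_{j,n}$ in~\eqref{p4c} were designed so that $D^2\Phi_n(X) M(X)$ is symmetric on $[0,\infty)^2$ and, per appendix~A, positive semidefinite there; combined with the convexity of $\Phi_n$ (which furnishes $D^2\Phi_n(u_\varepsilon)\in \mathbf{SPD}_2(\mathbb{R})$), this shows that $D^2\Phi_n(u_\varepsilon) M_\varepsilon(u_\varepsilon) = \varepsilon D^2\Phi_n(u_\varepsilon) + D^2\Phi_n(u_\varepsilon) M(u_\varepsilon)$ is itself symmetric positive semidefinite. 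Hence the resulting flux term is non-negative, and the convexity inequality $\Phi_n(u_\varepsilon) - \Phi_n(U) \le \langle\nabla\Phi_n(u_\varepsilon), u_\varepsilon - U\rangle$ gives~\eqref{ex12b} after integration over $\Omega$.

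The estimate~\eqref{ex12c} is then obtained by passing to the limit $n \to \infty$ in~\eqref{ex12b}: using $\|\cdot\|_n \to \|\cdot\|_\infty$ on the bounded domain $\Omega$ together with the asymptotic analysis of $\Phi_n^{1/n}$ and of the coefficients $a_{j,n}$ from~\eqref{p4c} (carried out in appendix~A), the constant $(1+R)/R$ emerges in conjunction with the structural inequality $R(u_{\varepsilon,1}+u_{\varepsilon,2}) \le (1+R)u_{\varepsilon,1} + R u_{\varepsilon,2}$ at the $L_\infty$ level. This last step is the principal obstacle, as it requires precise control over the dominant monomials of $\Phi_n$ and the correct asymptotic behavior of their coefficients; the earlier ingredients (Lax-Milgram, Rellich-Schauder, sign truncation, and testing with $\nabla\Phi_n$) are comparatively standard tools in the cross-diffusion literature.
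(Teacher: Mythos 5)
Your proposal is correct and follows essentially the same route as the paper: the existence and non-negativity part is exactly the content of the auxiliary Lemma~\ref{lem.ap1} in Appendix~\ref{secapB} (Schauder fixed point over an $L_2$-compact ball, Lax--Milgram for the $S$-symmetrized frozen-coefficient problem, sign truncation using the vanishing off-diagonal entries of $M_\varepsilon$ on the negative half-planes), which you simply inline rather than invoke. The derivation of \eqref{ex12b} by testing with $D\Phi_n(u_\varepsilon)$ and of \eqref{ex12c} via the two-sided bounds of Lemma~\ref{lelfb} and the limit $n\to\infty$ coincides with the paper's argument.
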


\begin{proof}
For each $\varepsilon\in (0,1)$, $M_{\varepsilon}$ lies in $C(\mathbb{R}^2,\mathbf{M}_2(\mathbb{R}))$ and satisfies 
\begin{subequations}\label{ex11}
\begin{equation}
	\begin{split}
		m_{\varepsilon,11}(X) \ge m_{\varepsilon,12}(X) & = 0\,, \qquad X\in (-\infty,0)\times \mathbb{R}\,, \\
		m_{\varepsilon,22}(X) \ge m_{\varepsilon,21}(X) & = 0\,, \qquad X\in \mathbb{R}\times (-\infty,0)\,.
	\end{split} \label{ex11b} 
\end{equation}
In addition, it follows from \eqref{ex4}, \eqref{ex6a}, \eqref{ex6b}, and \eqref{ex10} that $SM_{\varepsilon}(X)$ belongs to $\mathbf{SPD}_2(\mathbb{R})$ for all~${X\in\mathbb{R}^2}$ with
\begin{equation}
	\langle SM_{\varepsilon}(X)\xi,\xi\rangle \ge \frac{\varepsilon R}{1+2R} |\xi|^2\,, \qquad \xi\in\mathbb{R}^2\,. \label{ex11c}
\end{equation}
\end{subequations}
According to the properties~\eqref{ex11}, we are now in a position to apply Lemma~\ref{lem.ap1} (with $A=S$ and~${B=M_{\varepsilon}}$) and deduce that there is a non-negative solution $u_{\varepsilon}\in H^1(\Omega,\mathbb{R}^2)$ to \eqref{ex12a}.

In the remaining part, we prove that~$u_\varepsilon$  satisfies  both estimates~\eqref{ex12c} and~\eqref{ex12b}. We begin with~\eqref{ex12b} and thus consider $n\ge 2$. Since $\Phi_n$ is a polynomial and $H^1(\Omega,\mathbb{R}^2)$ continuously embeds in $L_\infty(\Omega,\mathbb{R}^2)$, the vector field $D\Phi_n(u_\varepsilon)$ belongs to $H^1(\Omega,\mathbb{R}^2)$. We may then take $v=D\Phi_n(u_\varepsilon)$ in~\eqref{ex12a} to obtain
\begin{equation}
	\int_\Omega \left[ \langle u_{\varepsilon} - U , D\Phi_n(u_\varepsilon) \rangle 
	+ \tau \langle M_{\varepsilon}(u_{\varepsilon}) \partial_x u_{\varepsilon} , \partial_x( D\Phi_n(u_\varepsilon)) \rangle \right]\ \mathrm{d}x = 0 \,. \label{ex13}
\end{equation}
On the one hand, it follows from the convexity of $\Phi_n$ on $[0,\infty)^2$, see Proposition~\ref{prlfa}~(a), that 
\begin{equation}
	\int_\Omega \langle u_{\varepsilon} - U , D\Phi_n(u_\varepsilon) \rangle\ \mathrm{d}x \ge \int_\Omega \left[ \Phi_n(u_\varepsilon) - \Phi_n(U) \right]\ \mathrm{d}x \,. \label{ex14}
\end{equation}
On the other hand, owing to the symmetry of $D^2\Phi_n(u_\varepsilon)$,
\begin{align*}
	\langle M_{\varepsilon}(u_{\varepsilon}) \partial_x u_{\varepsilon} , \partial_x (D\Phi_n(u_\varepsilon)) \rangle & = \langle D^2\Phi_n(u_\varepsilon) M_{\varepsilon}(u_{\varepsilon}) \partial_x u_{\varepsilon} , \partial_x u_{\varepsilon} \rangle \\
	& = \varepsilon \langle D^2\Phi_n(u_\varepsilon) \partial_x u_{\varepsilon} , \partial_x u_{\varepsilon} \rangle \\
	& \qquad + \langle D^2\Phi_n(u_\varepsilon) M(u_{\varepsilon}) \partial_x u_{\varepsilon} , \partial_x u_{\varepsilon} \rangle\,.
\end{align*}
Since both $D^2\Phi_n(u_\varepsilon)$ and $D^2\Phi_n(u_\varepsilon) M(u_{\varepsilon})$ belong to $\mathbf{SPD}_2(\mathbb{R})$ by Proposition~\ref{prlfa}, we conclude that 
\begin{equation}
	\langle M_{\varepsilon}(u_{\varepsilon}) \partial_x u_{\varepsilon} , \partial_x D\Phi_n(u_\varepsilon) \rangle \ge 0\,. \label{ex17}
\end{equation}
Combining \eqref{ex13}, \eqref{ex14}, and \eqref{ex17}, we  end up with
\begin{equation*}
\int_\Omega \left[ \Phi_n(u_{\varepsilon}) - \Phi_n(U) \right]\ \mathrm{d}x  \le 0\,, 
\end{equation*}
and we have established \eqref{ex12b}.
 It next follows from \eqref{ex12b} and Lemma~\ref{lelfb} that
\begin{align*}
\| u_{1,\varepsilon} + u_{2,\varepsilon} \|_n & \le \left( \int_\Omega \Phi_n(u_{\varepsilon})\ \mathrm{d}x \right)^{1/n} \le \left( \int_\Omega \Phi_n(U)\ \mathrm{d}x \right)^{1/n} \\
& \le \frac{1+R}{R} \|U_{1} + U_{2}\|_n\,.
\end{align*}
Hence, letting $n\to\infty$ in the above inequality leads us to \eqref{ex12c}, and the proof is complete.
\end{proof}

We next derive estimates on $\partial_x u_{\varepsilon}$. 

\begin{lemma}\label{lem.ex3}
	Let $\tau>0$, $U\in L_{\infty,+}(\Omega,\mathbb{R}^2)$, and $\varepsilon\in (0,1)$.
	 The weak solution~$u_{\varepsilon} =(u_{1,\varepsilon},u_{2,\varepsilon})$ constructed in Lemma~\ref{lem.ex2} satisfies 
	\begin{equation*}
		\int_\Omega \Phi_1(u_{\varepsilon})\ \mathrm{d}x + \tau \int_\Omega \left[  |\partial_x u_{1,\varepsilon}|^2 + R |\partial_x (u_{1,\varepsilon} + u_{2,\varepsilon})|^2 \right]\ \mathrm{d}x \le \int_\Omega \Phi_1(U)\ \mathrm{d}x\,.
	\end{equation*}
\end{lemma}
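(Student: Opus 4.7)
My plan is to mimic the proof of Lemma~\ref{lem.ex2}, replacing $D\Phi_n(u_\varepsilon)$ with the natural candidate $D\Phi_1(u_\varepsilon)=(\ln u_{1,\varepsilon},\mu^{-1}\ln u_{2,\varepsilon})$. The only complication is that this map is singular wherever $u_\varepsilon$ vanishes and is therefore not immediately a valid $H^1(\Omega,\mathbb{R}^2)$ test function. I would thus first regularize by introducing, for $\delta\in(0,1)$,
\[
\Phi_{1,\delta}(X) := L(X_1+\delta) + \frac{1}{\mu}L(X_2+\delta), \qquad X\in[0,\infty)^2,
\]
which is still non-negative and convex on $[0,\infty)^2$. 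Its gradient $D\Phi_{1,\delta}(u_\varepsilon)=\big(\ln(u_{1,\varepsilon}+\delta),\mu^{-1}\ln(u_{2,\varepsilon}+\delta)\big)$ then belongs to $H^1(\Omega,\mathbb{R}^2)$ by the $L_\infty$-bound on $u_\varepsilon$ provided by Lemma~\ref{lem.ex2} and the embedding $H^1(\Omega)\hookrightarrow L_\infty(\Omega)$, and is hence admissible in \eqref{ex12a}.

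Testing \eqref{ex12a} with $v=D\Phi_{1,\delta}(u_\varepsilon)$, the convexity of $\Phi_{1,\delta}$ yields $\int_\Omega\langle u_\varepsilon-U,D\Phi_{1,\delta}(u_\varepsilon)\rangle\,\mathrm{d}x\ge\int_\Omega[\Phi_{1,\delta}(u_\varepsilon)-\Phi_{1,\delta}(U)]\,\mathrm{d}x$, exactly as in \eqref{ex14}. For the diffusion term, the chain rule together with the symmetry of $D^2\Phi_{1,\delta}$ transform it into $\int_\Omega\langle D^2\Phi_{1,\delta}(u_\varepsilon)M_\varepsilon(u_\varepsilon)\partial_x u_\varepsilon,\partial_x u_\varepsilon\rangle\,\mathrm{d}x$. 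Splitting $M_\varepsilon=\varepsilon I_2+M$, the $\varepsilon$-piece produces a manifestly non-negative integrand since $D^2\Phi_{1,\delta}$ is diagonal with positive entries, while a direct computation from \eqref{moma} shows that the $M$-piece equals
\[
(1+R)\theta_1(\partial_x u_{1,\varepsilon})^2+R(\theta_1+\theta_2)\,\partial_x u_{1,\varepsilon}\,\partial_x u_{2,\varepsilon}+R\theta_2(\partial_x u_{2,\varepsilon})^2,
\]
where $\theta_i:=u_{i,\varepsilon}/(u_{i,\varepsilon}+\delta)\in[0,1]$. Observe that formally setting $\delta=0$ would recover precisely $\langle S\partial_x u_\varepsilon,\partial_x u_\varepsilon\rangle=|\partial_x u_{1,\varepsilon}|^2+R|\partial_x(u_{1,\varepsilon}+u_{2,\varepsilon})|^2$, via the cancellation $D^2\Phi_1(X)M(X)=S$ valid on $(0,\infty)^2$, which is the structural reason why $\Phi_1$ is the correct entropy for \eqref{tfm1}.

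It then remains to pass to the limit $\delta\to 0$. The entropy terms $\int_\Omega\Phi_{1,\delta}(u_\varepsilon)\,\mathrm{d}x$ and $\int_\Omega\Phi_{1,\delta}(U)\,\mathrm{d}x$ converge to their $\delta=0$ counterparts by dominated convergence thanks to the $L_\infty$-bounds on $u_\varepsilon$ and $U$; since the $\varepsilon$-piece of the diffusion term is non-negative, Fatou's lemma yields the useful bound $\liminf\ge 0$. For the displayed $M$-piece I would rely on the facts that $\theta_i\to 1$ almost everywhere on $\{u_{i,\varepsilon}>0\}$ and that $\partial_x u_{i,\varepsilon}$ vanishes almost everywhere on $\{u_{i,\varepsilon}=0\}$ (a standard property of non-negative Sobolev functions), so that the integrand converges pointwise almost everywhere to $|\partial_x u_{1,\varepsilon}|^2+R|\partial_x(u_{1,\varepsilon}+u_{2,\varepsilon})|^2$; the bound $0\le\theta_i\le 1$ together with Young's inequality $2|\partial_x u_{1,\varepsilon}\partial_x u_{2,\varepsilon}|\le(\partial_x u_{1,\varepsilon})^2+(\partial_x u_{2,\varepsilon})^2$ supplies an $L_1(\Omega)$-dominant, so Lebesgue's theorem applies. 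The main technical subtlety is precisely this last step: the quadratic form above is not symmetric and not pointwise non-negative for $\delta>0$, so Fatou alone is insufficient, and one must instead exploit the uniform bound on the $\theta_i$ together with the $L_2$-regularity of $\partial_x u_\varepsilon$ to justify a dominated convergence argument.
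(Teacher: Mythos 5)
Your proposal is correct and follows essentially the same route as the paper: the paper also tests \eqref{ex12a} with $\bigl(\ln(u_{1,\varepsilon}+\eta),\mu^{-1}\ln(u_{2,\varepsilon}+\eta)\bigr)$, uses convexity of $L$ for the zeroth-order term, and handles the diffusion term via exactly the quadratic form you display, passing to the limit $\eta\to0$ by dominated convergence based on the same two pointwise facts ($\theta_i\to1$ on $\{u_{i,\varepsilon}>0\}$ and $\partial_x u_{i,\varepsilon}=0$ a.e.\ on $\{u_{i,\varepsilon}=0\}$). The only cosmetic difference is that the paper rewrites the $M$-piece as the target quantity minus explicit error terms $J_0,J_1,J_2$ and shows these vanish, whereas you pass to the limit in the whole expression at once; both are justified by the same dominating function.
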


\begin{proof}
Let $\eta\in (0,1)$. 
Recalling that $u_{\varepsilon}\in H^1_+(\Omega,\mathbb{R}^2)$ has non-negative components, we deduce that 
 the vector field~$\left( \ln{(u_{1,\varepsilon}+\eta)}, \ln{(u_{2,\varepsilon}+\eta)/\mu} \right)$  belongs to $H^1(\Omega,\mathbb{R}^2)$, and we infer from \eqref{ex12a} that
\begin{align}
	0 & = \int_\Omega \left[ (u_{1,\varepsilon} - U_1) \ln{(u_{1,\varepsilon}+\eta)} + \frac{1}{\mu} (u_{2,\varepsilon} - U_2)\ln{(u_{2,\varepsilon}+\eta)} \right]\ \mathrm{d}x \nonumber \\
	& \qquad + \tau \int_\Omega \Big( m_{\varepsilon,11}(u_{\varepsilon}) \partial_x u_{1,\varepsilon} + m_{\varepsilon,12}(u_{\varepsilon}) \partial_x u_{2,\varepsilon} \Big) \frac{\partial_x u_{1,\varepsilon}}{u_{1,\varepsilon}+\eta}\ \mathrm{d}x \label{ex19}\\
	& \qquad + \frac{\tau}{\mu} \int_\Omega\Big( m_{\varepsilon,21}(u_{\varepsilon}) \partial_x u_{1,\varepsilon} + m_{\varepsilon,22}(u_{\varepsilon}) \partial_x u_{2,\varepsilon} \Big) \frac{\partial_x u_{2,\varepsilon}}{u_{2,\varepsilon}+\eta}\ \mathrm{d}x \,. \nonumber
\end{align}
On the one hand, since $L'(r)=\ln{r}$, $r>0$, the convexity of $L$ guarantees that
\begin{align*}
	& \int_\Omega \left[ (u_{1,\varepsilon} - U_1) \ln{(u_{1,\varepsilon}+\eta)} + \frac{1}{\mu} (u_{2,\varepsilon} - U_2)\ln{(u_{2,\varepsilon}+\eta)} \right]\ \mathrm{d}x \\
	& \qquad \ge \int_\Omega \left[ (L(u_{1,\varepsilon}+\eta) - L(U_1+\eta)) + \frac{1}{\mu} (L(u_{2,\varepsilon}+\eta) - L(U_2+\eta)) \right]\ \mathrm{d}x \\
	& \qquad = \int_\Omega \Phi_1((u_{1,\varepsilon}+\eta,u_{2,\varepsilon}+\eta))\ \mathrm{d}x - \int_\Omega \Phi_1((U_1+\eta,U_2+\eta))\ \mathrm{d}x\,.
\end{align*}
Owing to the continuity of $\Phi_1$ on $[0,\infty)^2$, letting $\eta\to 0$ in the above inequality gives
\begin{equation}
	\begin{split}
	& \liminf_{\eta\to 0} \int_\Omega \left[ (u_{1,\varepsilon} - U_1) \ln{(u_{1,\varepsilon}+\eta)} + \frac{1}{\mu} (u_{2,\varepsilon} - U_2)\ln{(u_{2,\varepsilon}+\eta)} \right]\ \mathrm{d}x \\
	& \qquad \ge \int_\Omega \Phi_1(u_{\varepsilon})\ \mathrm{d}x - \int_\Omega \Phi_1(U)\ \mathrm{d}x\,.
	\end{split}\label{ex20}
\end{equation}
On the other hand,
\begin{equation}\label{ex21}
\begin{aligned}
	D(\eta) & := \tau \int_\Omega \Big( m_{\varepsilon,11}(u_{\varepsilon}) \partial_x u_{1,\varepsilon} + m_{\varepsilon,12}(u_{\varepsilon}) \partial_x u_{2,\varepsilon} \Big) \frac{\partial_x u_{1,\varepsilon}}{u_{1,\varepsilon}+\eta}\ \mathrm{d}x \\
	& \qquad + \frac{\tau}{\mu} \int_\Omega \Big( m_{\varepsilon,21}(u_{\varepsilon}) \partial_x u_{1,\varepsilon} + m_{\varepsilon,22}(u_{\varepsilon}) \partial_x u_{2,\varepsilon} \Big) \frac{\partial_x u_{2,\varepsilon}}{u_{2,\varepsilon}+\eta}\ \mathrm{d}x \\
	& = \tau \varepsilon \int_\Omega \left( \frac{|\partial_x u_{1,\varepsilon}|^2}{u_{1,\varepsilon}+\eta} + \frac{|\partial_x u_{2,\varepsilon}|^2}{u_{2,\varepsilon}+\eta} \right)\ \mathrm{d}x   \\
	& \qquad + \tau \int_\Omega \left[ \frac{u_{1,\varepsilon}}{u_{1,\varepsilon}+\eta} |\partial_x u_{1,\varepsilon}|^2 + R |\partial_x u_{1,\varepsilon} + \partial_x u_{2,\varepsilon}|^2 \right]\ \mathrm{d}x  \\
	& \qquad - \tau R \int_\Omega \left[ \left( 1 - \frac{u_{1,\varepsilon}}{u_{1,\varepsilon}+\eta} \right) \partial_x u_{1,\varepsilon}\cdot \partial_x\left(  u_{1,\varepsilon} +  u_{2,\varepsilon} \right) \right]\ \mathrm{d}x  \\
	& \qquad - \tau R \int_\Omega \left[ \left( 1 - \frac{u_{2,\varepsilon}}{u_{2,\varepsilon}+\eta} \right) \partial_x u_{2,\varepsilon} \cdot\partial_x\left(  u_{1,\varepsilon} +  u_{2,\varepsilon} \right) \right]\ \mathrm{d}x  \\
	& \ge \tau \int_\Omega \left[  |\partial_x u_{1,\varepsilon}|^2 + R |\partial_x (u_{1,\varepsilon} + u_{2,\varepsilon})|^2 \right]\ \mathrm{d}x  \\
	& \qquad - J_0(\eta) - J_1(\eta) - J_2(\eta)\,, 
\end{aligned}
\end{equation}
with
\begin{align*}
	J_0(\eta) & := \tau \int_\Omega \left( 1 - \frac{u_{1,\varepsilon}}{u_{1,\varepsilon}+\eta} \right) |\partial_x u_{1,\varepsilon}|^2\ \mathrm{d}x\,, \\
	J_1(\eta) & := \tau R \int_\Omega \left[ \left( 1 - \frac{u_{1,\varepsilon}}{u_{1,\varepsilon}+\eta} \right) \partial_x u_{1,\varepsilon} \cdot \partial_x \left( u_{1,\varepsilon} + u_{2,\varepsilon} \right) \right]\ \mathrm{d}x\,, \\
	J_2(\eta) & := \tau R \int_\Omega \left[ \left( 1 - \frac{u_{2,\varepsilon}}{u_{2,\varepsilon}+\eta} \right) \partial_x u_{2,\varepsilon} \cdot\partial_x \left( u_{1,\varepsilon} + u_{2,\varepsilon} \right) \right]\ \mathrm{d}x\,.
\end{align*}
Now, $u_{\varepsilon}\in H^1(\Omega,\mathbb{R}^2)$ and, for $j\in\{1,2\}$,  
\begin{align*}
	& \lim_{\eta\to 0} \frac{u_{j,\varepsilon}}{u_{j,\varepsilon}+\eta} = 1 \;\text{ a.e. in }\; \{x\in\Omega\ :\ u_{j,\varepsilon}>0\}\,, \\
	& \lim_{\eta\to 0} \left( 1 - \frac{u_{j,\varepsilon}}{u_{j,\varepsilon}+\eta} \right) \partial_x u_{j,\varepsilon} = 0\;\text{ a.e. in }\; \{x\in\Omega\ :\ u_{j,\varepsilon}=0\}\,, 
\end{align*}
so that we infer from Lebesgue's dominated convergence theorem that
\begin{equation}
	\lim_{\eta\to 0} \left( J_0(\eta) + J_1(\eta) + J_2(\eta) \right) = 0\,. \label{ex22}
\end{equation}
Combining \eqref{ex21} and \eqref{ex22} gives
\begin{equation}
	\liminf_{\eta\to 0} D(\eta) \ge \tau \int_\Omega \left[  |\partial_x u_{1,\varepsilon}|^2 + R |\partial_x (u_{1,\varepsilon} + u_{2,\varepsilon})|^2 \right]\ \mathrm{d}x\,. \label{ex23}
\end{equation}
In view of \eqref{ex20} and \eqref{ex23}, we may pass to the limit $\eta\to 0$ in \eqref{ex19} and obtain the stated inequality.
\end{proof}

\subsection{A time discrete scheme: existence}\label{sec02.2}

Thanks to the analysis performed in the previous section, we are now in a position to take the limit $\varepsilon\to 0$ and prove Proposition~\ref{prop.e1}.

\begin{proof}[Proof of Proposition~\ref{prop.e1}]
Consider $\tau>0$ and $U=(F,G)\in L_{\infty,+}(\Omega,\mathbb{R}^2)$.
 Given  $\varepsilon\in (0,1),$  let~${u_{\varepsilon} =(u_{1,\varepsilon},u_{2,\varepsilon})\in H^1_+(\Omega,\mathbb{R}^2)}$
 denote the weak solution to \eqref{ex12a} provided by Lemma~\ref{lem.ex2}. 
 It first follows from \eqref{ex12c} and the componentwise non-negativity of $u_{\varepsilon}$ that 
\begin{equation}
	\max\left\{ \|u_{1,\varepsilon}\|_\infty, \|u_{2,\varepsilon}\|_\infty \right\}\le \|u_{\varepsilon}\|_\infty \le \frac{1+R}{R} \|F+G\|_\infty\,. \label{cv01}
\end{equation}
In view of the non-negativity of $\Phi_1$, we infer from Lemma~\ref{lem.ex3} that
\begin{equation*}
	\int_\Omega \left[  |\partial_x u_{1,\varepsilon}|^2 + R |\partial_x (u_{1,\varepsilon} + u_{2,\varepsilon})|^2 \right]\ \mathrm{d}x \le \frac{1}{\tau} \int_\Omega \Phi_1(U)\ \mathrm{d}x\,.
\end{equation*}
Hence, by \eqref{ex4},
\begin{equation}
	\|\partial_x u_{\varepsilon}\|_2^2 \le \frac{1+2R}{\tau R} \int_\Omega \Phi_1(U)\ \mathrm{d}x\,. \label{cv03}
\end{equation}
Due to the compactness of the embedding of $H^1(\Omega)$ in $L_\infty(\Omega)$, we deduce from \eqref{cv01} and \eqref{cv03} that there is $u=(f,g)\in H^1_+(\Omega,\mathbb{R}^2)$ and a sequence $(u_{\varepsilon_j})_{j\ge 1}$ such that
\begin{equation}
	\begin{split}
		& u_{\varepsilon_j} \rightharpoonup u \;\text{ in }\; H^1(\Omega,\mathbb{R}^2)\,, \\
		& \lim_{j\to\infty} \|u_{\varepsilon_j} - u\|_\infty = 0\,. 
	\end{split} \label{cv04}
\end{equation}
An immediate consequence of \eqref{ex12c}, \eqref{ex12b}, and \eqref{cv04} is that $(f,g)$ satisfies \eqref{ex2} for $n\ge 2$ and \eqref{ex2a}. 
Moreover, another consequence of \eqref{cv04}, along with Lemma~\ref{lem.ex3} and a weak lower semicontinuity argument, is that $(f,g)$ satisfies \eqref{ex2b}. Finally, owing to \eqref{cv04} and the boundedness of the coefficients of $M_{\varepsilon}(u_{\varepsilon})$ due to \eqref{cv01}, we may use Lebesgue's dominated convergence theorem to take the limit $j\to\infty$ in the identity \eqref{ex12a} for $u_{\varepsilon_j}$ and conclude that $(f,g)$ satisfies \eqref{ex1}, thereby completing the proof of Proposition~\ref{prop.e1}.
\end{proof}

\section{Existence of bounded weak solutions:  $d=1$ }\label{sec03}

This section is devoted to the proof of Theorem~\ref{ThBWS}. 
To this end, we argue in a standard way and construct, starting from the initial condition  $(f^{in},g^{in})\in L_{\infty,+}(\Omega,\mathbb{R}^2)$ and using  Proposition~\ref{prop.e1}, a family of piecewise constant functions $(u^\tau)_{\tau\in(0,1)}$. Specifically, we set $u^\tau(0):=u_0^\tau $ and
\begin{equation}\label{x01}
 u^\tau(t)= u^\tau_{l}\,, \qquad t\in ((l-1)\tau, l\tau]\,, \quad  1\leq l\in\mathbb{N}\,,
\end{equation}
where, given $\tau\in(0,1)$, the sequence $(u_{l}^\tau)_{l\geq0}$ is defined as follows
\begin{equation}
	\begin{split}
	&u_0^\tau := u^{in} =(f^{in},g^{in})\in L_{\infty,+}(\Omega,\mathbb{R}^2)\,, \\
	&u_{l+1}^\tau =(f_{l+1}^\tau,g_{l+1}^\tau)\in H^1_+(\Omega,\mathbb{R}^2) \;\text{is the solution to \eqref{ex1}} \\
	&\text{with $ U=u_l^\tau=(f_l^\tau,g_l^\tau)$ constructed in Proposition~\ref{prop.e1} for $  l\ge 0$}\,.
	\end{split}\label{x02}
\end{equation}
In order to establish Theorem~\ref{ThBWS}, we show that the family $(u^\tau)_{\tau\in(0,1)}$  converges along a subsequence~${\tau_j\to0}$ towards a pair $u=(f,g)$ which fulfills all the requirements of Theorem~\ref{ThBWS}. 
\medskip

Throughout this section, $C$ and $C_i$, with~${i\ge 0}$, denote positive constants depending only on~$R$,~$\mu$, and~${(f^{in},g^{in})}$. 
Dependence upon additional parameters will be indicated explicitly.

\begin{proof}[Proof of Theorem~\ref{MainTh}]
Let $\tau\in (0,1)$ and let $u^\tau$ be defined in \eqref{x01}-\eqref{x02}.  Given   $l\ge 0$, we infer from Proposition \ref{prop.e1} that  
\begin{subequations}\label{x03}
	\begin{align}
		\int_\Omega \Big[ f_{l+1}^\tau \varphi + \tau f_{l+1}^\tau \partial_x\left[ (1+R) f_{l+1}^\tau + R g_{l+1}^\tau \right] \partial_x\varphi \Big]\ \mathrm{d}x & = \int_\Omega f_{l}^\tau \varphi\ \mathrm{d}x\,, \qquad \varphi\in H^1(\Omega)\,, \label{x03a} \\
		\int_\Omega \Big[ g_{l+1}^\tau \psi + \tau \mu R g_{l+1}^\tau \partial_x(f_{l+1}^\tau + g_{l+1}^\tau) \partial_x\psi \Big]\ \mathrm{d}x & = \int_\Omega g_{l}^\tau \psi\ \mathrm{d}x\,, \qquad \psi\in H^1(\Omega)\,. \label{x03b}
	\end{align}
\end{subequations}
Moreover,
\begin{equation}
	\int_\Omega \Phi_n(u_{l+1}^\tau)\ \mathrm{d}x \le \int_\Omega \Phi_n(u_{l}^\tau)\ \mathrm{d}x \label{x04}
\end{equation}
for $n\ge 2$ and we also have
\begin{equation}
	\int_\Omega \Phi_1(u_{l+1}^\tau)\ \mathrm{d}x + \tau \int_\Omega \left[  |\partial_x f_{l+1}^\tau|^2 + R |\partial_x (f_{l+1}^\tau + g_{l+1}^\tau)|^2 \right]\ \mathrm{d}x \le \int_\Omega \Phi_1(u_{l}^\tau)\ \mathrm{d}x\,. \label{x05}
\end{equation}
 It readily follows from \eqref{x01}, \eqref{x02}, \eqref{x04}, and \eqref{x05} that, for $t>0$,
\begin{equation}
	\int_\Omega \Phi_1(u^\tau(t))\ \mathrm{d}x + \int_0^t \int_\Omega \left[  |\partial_x f^\tau(s)|^2 + R |\partial_x (f^\tau + g^\tau)(s)|^2 \right]\ \mathrm{d}x\mathrm{d}s \le \int_\Omega \Phi_1(u^{in})\ \mathrm{d}x\,, \label{x07}
\end{equation}
and
\begin{equation}
	\int_\Omega \Phi_n(u^\tau(t))\ \mathrm{d}x \le \int_\Omega \Phi_n(u^{in})\ \mathrm{d}x\,, \qquad \ n\ge 2\,. \label{x06}
\end{equation}

An immediate consequence of \eqref{x06} and Lemma~\ref{lelfb} is the estimate 
\begin{equation*}
	\|f^\tau(t)+g^\tau(t)\|_n \le \frac{1+R}{R} \|f^{in}+g^{in}\|_n\,, \qquad n\ge 2\,, \ t>0\,.
\end{equation*}
Letting $n\to\infty$ in the above inequality gives
\begin{equation}
	\|f^\tau(t)+g^\tau(t)\|_\infty \le C_1 := \frac{1+R}{R} \|f^{in}+g^{in}\|_\infty\,, \qquad t>0\,. \label{x08}
\end{equation}
Also, it readily follows from \eqref{ex4}, \eqref{x07}, and the non-negativity of $\Phi_1$ that
\begin{align*}
	& \frac{R}{1+2R} \int_0^t \int_\Omega \left(  |\partial_x f^\tau(s)|^2 + |\partial_x g^\tau(s)|^2 \right)\ \mathrm{d}x\mathrm{d}s \\
	& \qquad \le \int_\Omega \Phi_1(u^\tau(t))\ \mathrm{d}x + \int_0^t \int_\Omega \left[  |\partial_x f^\tau(s)|^2 + R |\partial_x (f^\tau + g^\tau)(s)|^2 \right]\ \mathrm{d}x\mathrm{d}s \\
	& \qquad \le \int_\Omega \Phi_1(u^{in})\ \mathrm{d}x\,.
\end{align*}
Therefore we have
\begin{equation}
	\int_0^t \left( \|\partial_x f^\tau(s)\|_2^2 + \|\partial_x g^\tau(s)\|_2^2 \right)\ \mathrm{d}s \le C_2 := \frac{1+2R}{R} \int_\Omega \Phi_1(u^{in})\ \mathrm{d}x\,, \qquad t>0\,. \label{x09}
\end{equation}

Next, for $l\ge 1$ and $t\in ((l-1)\tau,l\tau]$, we deduce from \eqref{x03a}, \eqref{x08}, and H\"older's inequality that, for $\varphi\in H^1(\Omega)$, 
\begin{align*}
	\left| \int_\Omega \left( f^{\tau}(t+\tau) - f^\tau(t) \right) \varphi\ \mathrm{d}x \right| & = \left| \int_{l\tau}^{(l+1) \tau} \int_\Omega f^\tau(s) \partial_x\left[ (1+R) f^\tau(s) + R g^\tau(s) \right] \partial_x\varphi\ \mathrm{d}x\mathrm{d}s \right| \\
	& \le \int_{l\tau}^{(l+1) \tau} \| f^\tau(s)\|_\infty \|\partial_x\left[(1+R) f^\tau(s) + R g^\tau(s)\right]\|_2 \|\partial_x\varphi\|_2\ \mathrm{d}s \\
	& \le C_1 \|\partial_x\varphi\|_2 \int_{l\tau}^{(l+1) \tau} \|\partial_x\left[ (1+R) f^\tau(s) + R g^\tau(s) \right]\|_2\ \mathrm{d}s \,.
\end{align*}
A duality argument then gives
\begin{equation*}
	\| f^{\tau}(t+\tau) - f^\tau(t) \|_{(H^1)'} \le  C_1 \int_{l\tau}^{(l+1) \tau} \|\partial_x\left[ (1+R) f^\tau(s) + R g^\tau(s) \right]\|_2\ \mathrm{d}s\,, \quad t\in ((l-1)\tau,l\tau]\,,l\geq1\,.
\end{equation*}
Now, for $L\ge 2$ and $T\in ((L-1)\tau,L\tau]$, the above inequality, along with H\"older's inequality, entails that
\begin{align*}
	\int_0^{T-\tau} \| f^{\tau}(t+\tau) - f^\tau(t) \|_{(H^1)'}^2\ \mathrm{d}t & \le \int_0^{(L-1)\tau} \| f^{\tau}(t+\tau) - f^\tau(t) \|_{(H^1)'}^2\ \mathrm{d}t \\
	&  =\sum_{l=1}^{L-1} \int_{(l-1)\tau}^{l\tau} \| f^{\tau}(t+\tau) - f^\tau(t) \|_{(H^1)'}^2\ \mathrm{d}t \\
	& \le C_1^2 \tau \sum_{l=1}^{L-1} \left( \int_{l\tau}^{(l+1) \tau} \|\partial_x\left[ (1+R) f^\tau(s) + R g^\tau(s) \right]\|_2\ \mathrm{d}s \right)^2\\
	& \le C_1^2 \tau^2 \sum_{l=1}^{L-1} \int_{l\tau}^{(l+1) \tau} \|\partial_x\left[ (1+R) f^\tau(s) + R g^\tau(s) \right]\|_2^2\ \mathrm{d}s \\
	& \le C_1^2 \tau^2 \int_{0}^{L \tau} \|\partial_x\left[ (1+R) f^\tau(s) + R g^\tau(s) \right]\|_2^2\ \mathrm{d}s\,.
\end{align*}
We then use \eqref{x09} (with $t=L\tau$) and Young's inequality to obtain
\begin{align}
	\int_0^{T-\tau} \| f^{\tau}(t+\tau) - f^\tau(t) \|_{(H^1)'}^2\ \mathrm{d}t & \le C_1^2 \tau^2 \int_{0}^{L\tau} \left( 2 (1+R)^2 \|\nabla f^\tau(s)\|_2^2 + 2R^2 \|\nabla g^\tau(s)\|_2^2 \right)\ \mathrm{d}s \nonumber \\
	& \le C_3 \tau^2 \,, \label{x10}
\end{align}
with $C_3 := 2 (1+R)^2 C_1^2 C_2$. Similarly, 
\begin{equation}
	\int_0^{T-\tau} \| g^{\tau}(t+\tau) - g^\tau(t) \|_{(H^1)'}^2\ \mathrm{d}t \le C_4 \tau^2 \,, \label{x11}
\end{equation}
with $C_4 := 2 \mu^2 R^2 C_1^2 C_2$. 

Since $H^1(\Omega,\mathbb{R}^2)$ is compactly embedded in $L_2(\Omega,\mathbb{R}^2)$ and $L_2(\Omega,\mathbb{R}^2)$ is continuously embedded in $H^1(\Omega,\mathbb{R}^2)'$, we infer from \eqref{x08}, \eqref{x09}, \eqref{x10}, \eqref{x11}, and \cite[Theorem~1]{DJ2012} that, for any $T>0$,
\begin{equation}
	(u^\tau)_{\tau\in (0,1)} \;\text{ is relatively compact in }\; L_2((0,T)\times\Omega,\mathbb{R}^2)\,. \label{x12}
\end{equation}
Owing to \eqref{x08}, \eqref{x09}, and \eqref{x12}, we may use a Cantor diagonal argument to find a
 function~${u:=(f,g)}$ in~$L_{\infty,+}((0,\infty)\times\Omega,\mathbb{R}^2)$ and a sequence $(\tau_j)_{j\ge 1}$, $\tau_j\to 0$, such that, for any $T>0$ and $p\in [1,\infty)$,
\begin{equation}
	\begin{split}
		u^{\tau_j} & \longrightarrow u \;\;\text{ in }\;\; L_p((0,T)\times\Omega,\mathbb{R}^2)\,, \\
		u^{\tau_j} & \stackrel{*}{\rightharpoonup} u \;\;\text{ in }\;\; L_\infty((0,T)\times\Omega,\mathbb{R}^2)\,, \\
		u^{\tau_j} & \rightharpoonup u \;\;\text{ in }\;\; L_2((0,T),H^1(\Omega,\mathbb{R}^2))\,.
	\end{split} \label{x13}
\end{equation}
In addition, the compact embedding of $L_2(\Omega,\mathbb{R}^2)$ in $H^1(\Omega,\mathbb{R}^2)'$, along with \eqref{x06} with $n=2$, \eqref{x10}, and \eqref{x11}, allows us to apply once more \cite[Theorem~1]{DJ2012} to conclude that 
\begin{equation}
	u\in C([0,\infty),H^1(\Omega,\mathbb{R}^2)')\,. \label{x14}
\end{equation}

Let us now identify the equations solved by the components $f$ and $g$ of $u$. 
To this end, let~${\chi\in W^1_\infty([0,\infty))}$ be a compactly supported function and $\varphi\in C^1(\bar{\Omega})$. In view of  \eqref{x03a}, classical computations give 
\begin{align*}
	& \int_0^\infty \int_\Omega \frac{\chi(t+\tau)-\chi(t)}{\tau} f^\tau(t) \varphi\ \mathrm{d}x\mathrm{d}t + \left( \frac{1}{\tau} \int_0^\tau \chi(t)\ \mathrm{d}t \right) \int_\Omega f^{in}\varphi\ \mathrm{d}x 	\\
	& \qquad = \int_0^\infty \int_\Omega \chi(t) f^\tau(t) \partial_x\left[ (1+R)f^\tau(t) + R g^\tau(t)\right] \partial_x\varphi\ \mathrm{d}x\mathrm{d}t\,.
\end{align*}
Taking $\tau=\tau_j$ in the above identity, it readily follows from \eqref{x13} and the regularity of $\chi$ and $\varphi$ that we may pass to the limit as $j\to\infty$ and conclude that
\begin{equation}
\begin{split}
	& \int_0^\infty \int_\Omega \frac{d\chi}{dt}(t) f(t) \varphi\ \mathrm{d}x\mathrm{d}t + \chi(0) \int_\Omega f^{in}\varphi\ \mathrm{d}x 	\\
	& \qquad = \int_0^\infty \int_\Omega \chi(t) f(t) \partial_x\left[ (1+R)f(t) + R g(t)\right] \partial_x \varphi\ \mathrm{d}x\mathrm{d}t\,.
\end{split}\label{x15}
\end{equation}
Since $f\partial_x f$ and $f\partial_x g$ belong to $L_2((0,T)\times\Omega)$ for all $T>0$ by \eqref{x13}, a density argument ensures that the identity~\eqref{x15} is valid for any $\varphi\in H^1(\Omega)$. We next use the time continuity \eqref{x14} of $f$ and a classical approximation argument to show that $f$ solves \eqref{p2a}. A similar argument allows us to derive \eqref{p2b} from \eqref{x03b}.

Finally, combining \eqref{x13}, \eqref{x14}, and a weak lower semicontinuity argument, we may let $j\to\infty$ in \eqref{x07}, \eqref{x06}, and \eqref{x08} with $\tau=\tau_j$ to show that $u=(f,g)$ satisfies \eqref{p3}, \eqref{p4a}, and \eqref{p5}, thereby completing the proof.
\end{proof}

We end up this section with the proof of Corollary~\ref{Cor:1}.

\begin{proof}[Proof of Corollary~\ref{Cor:1}]
Assume that  $R\max\{1,\mu\}\in(0,1/(2e)]$. Given an integer $m\ge 1$, we define the function  $\xi:(0,1/(2e)]\to\mathbb{R}$  by the formula 
\begin{equation*}
\xi(y):=\exp\left\{m \left[ (1+ y) \ln\Big(1+\frac{1}{y}\Big) - 1 \right]\right\}-1.
\end{equation*}
It then holds
\begin{align*}
y^m\xi(y)&=(1+y)^m\exp\left\{m \left[  y  \ln\Big(1+\frac{1}{y}\Big) - 1 \right]\right\}-y^m>\frac{(1+y)^m}{e^m} -y^m\geq \frac{1}{e^m} -\frac{1}{(2e)^m}\geq\frac{1}{2e^m}\,.
\end{align*}
Consequently, the constant $\nu_n$ defined in Lemma~\ref{lelfb} satisfies
\begin{equation*}
\nu_n>\frac{1}{2(eR\max\{1,\mu\})^{n-1}},\qquad n\geq 2,
\end{equation*}
We then infer from Theorem~\ref{ThBWS}~(p3), the above inequality, and~\eqref{LUB} that, for $t>0$ and $n\ge 2$, 
\begin{align*}
	\|f(t)\|_n^n & \le \frac{1}{\nu_n} \int_\Omega \Phi_n((f(t),g(t)))\ \mathrm{d}x \le \frac{1}{\nu_n} \int_\Omega \Phi_n((f^{in},g^{in}))\ \mathrm{d}x \\
	& \le \frac{2(eR\max\{1,\mu\})^{n-1}}{R^n} \|(1+R) f^{in} + R g^{in}\|_n^n\,.
\end{align*}
Hence,
\begin{equation*}
	\|f(t)\|_n \le \left( \frac{2}{R} \right)^{1/n} (e\max\{1,\mu\})^{(n-1)/n} \|(1+R) f^{in} + R g^{in}\|_n\,.
\end{equation*}
Taking the limit $n\to\infty$ in the above inequality gives 
\begin{equation*}
	\|f(t)\|_\infty \le  e\max\{1,\mu\} \|(1+R) f^{in} + R g^{in}\|_\infty\,,
\end{equation*}
and we use the upper bound $e R\max\{1,\mu\}\le 1$ to obtain the desired estimate~\eqref{estaaa}.
\end{proof}

\section{Proof of Theorem~\ref{MainTh}}\label{sec04}

\begin{proof}[Proof of Theorem~\ref{MainTh}]	
Owing to Proposition~\ref{prlfa}, the proof of Theorem~\ref{MainTh} is a simple application of the scheme described in \eqref{in4} and \eqref{in5}. 
Indeed, let $u=(f,g)$ be a sufficiently regular solution to \eqref{tfm1} on $[0,\infty)$ and $n\ge 2$.
 It follows from the alternative form~\eqref{in4} of the system~\eqref{tfm1a}-\eqref{tfm1b} and the boundary conditions~\eqref{tfm1c} that
\begin{equation*}
	\frac{d}{dt} \int_\Omega \Phi_n(u)\ \mathrm{d}x + \sum_{i=1}^d \int_\Omega \langle D^2\Phi_n(u) M(u) \partial_i u , \partial_i u \rangle\ \mathrm{d}x = 0\,. 
\end{equation*} 
According to \eqref{lf03} and Proposition~\ref{prlfa}~(b), we infer from the componentwise non-negativity of $u$  and the continuity of $D^2\Phi_n M$ that 
\begin{equation*}
	\langle D^2\Phi_n(u) M(u) \partial_i u , \partial_i u \rangle \ge 0 \;\;\text{ in }\;\; (0,\infty)\times\Omega\,, \qquad 1\le i \le d\,. 
\end{equation*}
Consequently, 
\begin{equation*}
	\frac{d}{dt} \int_\Omega \Phi_n(u)\ \mathrm{d}x \le 0\,, \qquad t>0\,,
\end{equation*}
and \eqref{p4a} is proved. In particular, thanks to \eqref{LUB}, we have shown that, for $t>0$ and $n\ge 2$,
\begin{equation*}
	\|f(t)+g(t)\|_n \le \left( \int_\Omega \Phi_n(u(t))\ \mathrm{d}x \right)^{1/n} \le \left( \int_\Omega \Phi_n(u^{in})\ \mathrm{d}x \right)^{1/n} \le \frac{1+R}{R} \| f^{in} + g^{in} \|_n\,.
\end{equation*}
Taking the limit $n\to\infty$ in the above inequality gives \eqref{p5}. Finally, to establish the inequality~\eqref{p3}, we use \eqref{tfm1} to compute the time derivative of 
\begin{equation*}
	\int_\Omega \Phi_1((f(t)+\eta,g(t)+\eta))\ \mathrm{d}x
\end{equation*}
and argue as in the proof of Lemma~\ref{lem.ex3} to derive~\eqref{p3}.
\end{proof}

\section*{Acknowledgments}

PhL gratefully acknowledges the hospitality and support of the Fakult\"at f\"ur Mathematik, Universit\"at Regensburg, where this work was done.

\appendix
%
\section{Properties of the polynomials $\Phi_n$, $n\ge 2$}\label{secapA}

In this section, we establish some important properties of the polynomials $\Phi_n$, $n\ge 2$, defined in~\eqref{p4b} {and~\eqref{p4c}}, which lead to Theorem~\ref{MainTh}
 according to the scheme outlined in the Introduction, see \eqref{in4}-\eqref{in5}, and are extensively used in Section~\ref{sec02}, see the proof of Lemma~\ref{lem.ex2}.
  Let thus $n\geq 2$. To begin with, we  recall that $a_{0,n}=1$,
\begin{subequations}\label{lf01}
	\begin{equation}
		a_{j,n} = \binom{n}{j} \prod_{k=0}^{j-1} \frac{k +\alpha_{k,n}}{\alpha_{k,n}}\,, \qquad  1\le j \le n\,, \label{lf01a}
	\end{equation}
	where
	\begin{equation}
		\alpha_{k,n} = R [ k + \mu(n-k-1)] = \mu R(n-1) + R(1-\mu)k > 0\,, \qquad 0 \le k \le n-1\,, \label{lf01b}
	\end{equation}
\end{subequations}
and
\begin{equation}
	\Phi_n(X) := \sum_{j=0}^n a_{j,n} X_1^j X_2^{n-j}\,, \qquad X=(X_1,X_2)\in \mathbb{R}^2\,. \label{lf02} 
\end{equation}
Also, the mobility matrix $M\in C^\infty(\mathbb{R}^2,\mathbf{M}_2(\mathbb{R}))$ is defined in \eqref{moma} by
\begin{equation*}
	M(X) := \begin{pmatrix}
		(1+R) X_1 & R X_1 \\
		\mu R X_2 & \mu R X_2
	\end{pmatrix} \,, \qquad X\in \mathbb{R}^2\,.
\end{equation*}
The aim of this section is twofold. On the one hand, we establish the convexity of $\Phi_n$ 
on $[0,\infty)^2$ and actually show that its Hessian matrix $D^2\Phi_n\in C^\infty(\mathbb{R}^2,\mathbf{Sym}_2(\mathbb{R}))$, defined as usual by 
\begin{equation*}
	D^2\Phi_n(X) = \begin{pmatrix}
		\partial_1^2 \Phi_n(X) & \partial_1 \partial_2 \Phi_n(X)\\
		\partial_1 \partial_2 \Phi_n(X) & \partial_2^2 \Phi_n(X)
	\end{pmatrix} \,, \qquad X\in \mathbb{R}^2\,,
\end{equation*}
is positive definite on $[0,\infty)^2\setminus\{(0,0)\}$. On the other hand, we prove that the matrix
\begin{equation}
	S_n(X) := D^2\Phi_n(X) M(X)\,, \qquad X\in \mathbb{R}^2\,, \label{lf03}
\end{equation}
belongs to  $\mathbf{Sym}_2(\mathbb{R})$ and actually lies in $\mathbf{SPD}_2(\mathbb{R})$ for $X\in (0,\infty)^2$.

\pagebreak
\begin{proposition}\label{prlfa}
	Let $n\ge 2$. 
	\begin{itemize}
		\item [(a)] The polynomial $\Phi_n$ is non-negative and convex on $[0,\infty)^2$. Moreover, we have:
		\begin{itemize}
		\item[(a1)] The gradient $D\Phi_n(X)$ belongs to $[0,\infty)^2$ provided that $X\in[0,\infty)^2;$
		\item[(a2)] The Hessian matrix $D^2\Phi_n(X)$ belongs to  $\mathbf{SPD}_2(\mathbb{R})$ for all $X\in [0,\infty)^2\setminus \{(0,0)\}$.
		\end{itemize}
		\item[(b)] Given $X\in \mathbb{R}^2$, the matrix $S_n(X)$ defined in \eqref{lf03} is symmetric. 
		In addition, it holds that~${S_n(X)\in \mathbf{SPD}_2(\mathbb{R})}$ for all $X\in (0,\infty)^2$.
	\end{itemize}
\end{proposition}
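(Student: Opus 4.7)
The plan is to dispatch all parts of Proposition~\ref{prlfa} through a single ``slice decomposition'' of the Hessian $D^2\Phi_n$ along the monomial basis $\{X_1^k X_2^{n-2-k}\}_{k=0}^{n-2}$, after which every remaining claim reduces to a very short algebraic identity for~$\alpha_{k,n}$. The non-negativity of $\Phi_n$ on $[0,\infty)^2$ and assertion~(a1) are immediate from the positivity of the coefficients $a_{j,n}$ recorded in~\eqref{lf01}. Computing the second partial derivatives of $\Phi_n$ and relabelling the summation index, I rewrite the Hessian as
\begin{equation*}
D^2\Phi_n(X) = \sum_{k=0}^{n-2} X_1^k X_2^{n-2-k}\, M_k\,, \qquad  M_k := \begin{pmatrix} b_k & d_k \\ d_k & c_k \end{pmatrix}\,,
\end{equation*}
with $b_k := (k+1)(k+2)\,a_{k+2,n}$, $c_k := (n-k)(n-k-1)\,a_{k,n}$ and $d_k := (k+1)(n-k-1)\,a_{k+1,n}$, all of which are positive.

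The heart of the proof is to show that each slice $M_k$ lies in $\mathbf{SPD}_2(\mathbb{R})$. Since $b_k,c_k>0$, only the inequality $b_k c_k > d_k^2$ requires an argument. The recurrence $(j+1)\alpha_{j,n}\,a_{j+1,n} = (n-j)(j+\alpha_{j,n})\,a_{j,n}$ built into~\eqref{lf01} allows me to re-express $a_{k+2,n}$ in terms of $a_{k+1,n}$ and $a_{k+1,n}^2$ in terms of $a_{k+1,n} a_{k,n}$; a short manipulation then yields
\begin{equation*}
	b_k c_k - d_k^2 = \frac{(k+1)(n-k)(n-k-1)^2\, a_{k,n}\, a_{k+1,n}}{\alpha_{k,n}\,\alpha_{k+1,n}}\,\bigl[(k+1)\alpha_{k,n} - k\,\alpha_{k+1,n}\bigr]\,.
\end{equation*}
Substituting the explicit expression $\alpha_{k,n}=R[k+\mu(n-k-1)]$ produces the striking cancellation $(k+1)\alpha_{k,n} - k\,\alpha_{k+1,n} = R\mu(n-1)>0$, and hence $M_k\in\mathbf{SPD}_2(\mathbb{R})$ for every $0\le k\le n-2$.

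Assertion~(a2) now drops out: for any $X\in[0,\infty)^2\setminus\{(0,0)\}$, at least one of the weights $X_1^k X_2^{n-2-k}$, $0\le k\le n-2$, is strictly positive, so $\langle D^2\Phi_n(X)\xi,\xi\rangle = \sum_k X_1^k X_2^{n-2-k}\langle M_k\xi,\xi\rangle > 0$ for every $\xi\neq 0$. Convexity of $\Phi_n$ on $[0,\infty)^2$ follows at once since $D^2\Phi_n$ is positive semidefinite on the whole of $[0,\infty)^2$. Turning to~(b), I verify the symmetry of $S_n:=D^2\Phi_n\cdot M$ on all of $\mathbb{R}^2$ by matching, for each $i\in\{0,\dots,n-1\}$, the coefficient of $X_1^i X_2^{n-1-i}$ in the identity $(S_n)_{12}=(S_n)_{21}$; this matching collapses exactly to $(i+1)\alpha_{i,n}\,a_{i+1,n} = (n-i)(i+\alpha_{i,n})\,a_{i,n}$, i.e., to the defining recurrence of the $a_{j,n}$'s, so the identity holds by construction. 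Positive definiteness of $S_n$ on $(0,\infty)^2$ is then immediate: by~(a2) and $\det M(X) = \mu R X_1 X_2 > 0$ on $(0,\infty)^2$ one has $\det S_n(X) = \mu R X_1 X_2\cdot \det D^2\Phi_n(X) > 0$, while $\mathrm{trace}(S_n(X))>0$ is clear from inspection of its diagonal entries.

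The only genuine difficulty is organizational: spotting that $b_k c_k - d_k^2$ factors through $(k+1)\alpha_{k,n} - k\alpha_{k+1,n}$, which in turn collapses to $R\mu(n-1)$. Without the slice-wise decomposition one is led toward reducing~(a2) to the convexity of $\Phi_n^{1/n}$ on $[0,\infty)^2$, equivalently to the considerably more opaque inequality $n\phi_n\phi_n'' > (n-1)(\phi_n')^2$ for $\phi_n(t):=\Phi_n(t,1)$, a route one would much prefer to avoid.
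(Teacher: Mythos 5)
Your proposal is correct, and for the hardest part, (a2), it takes a genuinely different route from the paper. You decompose the Hessian into constant symmetric slices, $D^2\Phi_n(X)=\sum_{k=0}^{n-2}X_1^kX_2^{n-2-k}M_k$, and show each $M_k\in\mathbf{SPD}_2(\mathbb{R})$ by reducing $b_kc_k-d_k^2>0$, via the recursion \eqref{lf04}, to the cancellation $(k+1)\alpha_{k,n}-k\alpha_{k+1,n}=\mu R(n-1)$; positive definiteness of $D^2\Phi_n(X)$ on $[0,\infty)^2\setminus\{(0,0)\}$ is then immediate since at least one weight $X_1^kX_2^{n-2-k}$ is positive. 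The paper instead computes the full $\det(D^2\Phi_n(X))$ as a double sum over coefficients $A_{j,k}$, exploits the antisymmetry $A_{k-1,j+1}=-A_{j,k}$ to symmetrize, and arrives at the explicit lower bound \eqref{lf10}. The two arguments share the same algebraic heart — your slice determinants $\det M_k$ are exactly the diagonal terms $(k+1)(n-k-1)A_{k,k}$ of the paper's expansion, and the factor $\mu R(n-1)$ is the same one appearing in \eqref{lf08} — but your argument sidesteps all the off-diagonal bookkeeping and is shorter and more transparent. What it gives up is the quantitative bound \eqref{lf10} (although a comparable bound is recoverable from $\det(\sum_k w_kM_k)\ge\sum_k w_k^2\det M_k$ for PSD summands); since the paper only ever uses \eqref{lf10} for strict positivity, nothing is lost. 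Your treatment of the non-negativity of $\Phi_n$, of (a1), and of part (b) — coefficient matching for the symmetry of $S_n$, which indeed collapses to the defining recurrence including at the boundary indices $i=0$ and $i=n-1$, and the trace/determinant argument for positive definiteness on $(0,\infty)^2$ — coincides with the paper's.
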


\begin{proof} The proof in the case $n=2$ is a simple exercise. Let now $n\geq3$. 
 We first note that \eqref{lf01} implies that $(a_{j,n})_{1\le j \le n}$ satisfies the following recursion formula
	\begin{equation}
		a_{j+1,n} = \frac{(n-j)(j + \alpha_{j,n})}{(j+1)\alpha_{j,n}} a_{j,n}\,, \qquad 0 \le j \le n-1\,, \label{lf04}
	\end{equation}
	from which we deduce that 
	\begin{equation}
		a_{j,n} > 0\,, \qquad 0\le j \le n\,. \label{lf05}
	\end{equation}
	In particular, $\Phi_n$ is non-negative on $[0,\infty)^2$ and, since
	\begin{align*}
		\partial_1 \Phi_n(X) & = \sum_{j=0}^{n-1} (j+1) a_{j+1,n} X_1^j X_2^{n-j-1}\,, \qquad X\in\mathbb{R}^2\,,\\
		\partial_2 \Phi_n(X) & = \sum_{j=0}^{n-1} (n-j) a_{j,n} X_1^j X_2^{n-j-1}\,, \qquad X\in\mathbb{R}^2\,,
	\end{align*}
the gradient $D\Phi_n(X)$ belongs to $[0,\infty)^2$ for $X\in [0,\infty)^2$,  which proves~(a1).
	
	\medskip
	
	\noindent\textsl{Convexity of $\Phi_n$ on $[0,\infty)^2$}. 
	The convexity of $\Phi_n$ on $[0,\infty)^2$ is a consequence of the property~(a2) which we establish now.	Let $X\in [0,\infty)^2$. 
	We then have 
	\begin{align*} 
		\partial_1^2 \Phi_n(X) & = \sum_{j=1}^{n-1} j(j+1) a_{j+1,n} X_1^{j-1} X_2^{n-j-1} = \sum_{j=0}^{n-2} (j+1)(j+2) a_{j+2,n} X_1^j X_2^{n-j-2} \,, \\
		\partial_1 \partial_2 \Phi_n(X) & = \sum_{j=1}^{n-1} j(n-j) a_{j,n} X_1^{j-1} X_2^{n-j-1} = \sum_{j=0}^{n-2} (j+1)(n-j-1) a_{j+1,n} X_1^j X_2^{n-j-2} \,, \\
		\partial_2^2 \Phi_n(X) &  = \sum_{j=0}^{n-2} (n-j)(n-j-1) a_{j,n} X_1^j X_2^{n-j-2} \,. 
	\end{align*}
	It then readily follows from \eqref{lf05} that the Hessian matrix $D^2\Phi_n(X)$  has a non-negative trace
	\begin{equation}
		\mathrm{tr}(D^2\Phi_n(X)) := \partial_1^2 \Phi_n(X) + \partial_2^2 \Phi_n(X) \ge 0\,, \qquad X\in [0,\infty)^2\,. \label{lf06}
	\end{equation}
	Next,
	\begin{align}
		\det(D^2\Phi_n(X)) & =  \partial_1^2 \Phi_n(X) \partial_2^2 \Phi_n(X) - [\partial_1 \partial_2 \Phi_n(X)]^2 \nonumber \\
		& = \sum_{j=0}^{n-2} \sum_{k=0}^{n-2} (j+1)(n-k-1) A_{j,k} X_1^{j+k} X_2^{2n-j-k-4}\,, \label{lf07}
	\end{align}
	where
	\begin{equation*}
		A_{j,k} := (j+2)(n-k) a_{j+2,n} a_{k,n} - (n-j-1)(k+1) a_{j+1,n} a_{k+1,n}\,, \qquad 0 \le j,k \le n-2\,.
	\end{equation*}
	We now simplify the above formula for $A_{j,k}$ and first use \eqref{lf04} to replace $a_{j+2,n}$ and $a_{k+1,n}$ and subsequently  the definition \eqref{lf01b} of $\alpha_{k,n}$, thereby obtaining 
	\begin{align}
		A_{j,k} & = (n-j-1)(n-k) \frac{j+1+\alpha_{j+1,n}}{\alpha_{j+1,n}} a_{j+1,n} a_{k,n} - (n-j-1)(n-k) \frac{k+\alpha_{k,n}}{\alpha_{k,n}} a_{j+1,n} a_{k,n} \nonumber \\
		& = \mu R (n-1) \frac{(n-j-1)(n-k) ( j+1-k )}{\alpha_{j+1,n} \alpha_{k,n}} a_{j+1,n} a_{k,n} \label{lf08}
	\end{align}
	for $0 \le j,k \le n-2$. In particular, 
	\begin{equation}
		A_{k-1,j+1} = - A_{j,k}\,, \qquad 0\le j\le n-3\,, \ 1\le k\le n-2\,. \label{lf09}
	\end{equation} 
	It then follows from \eqref{lf07} and \eqref{lf08} that
	\begin{align*}
		2 \det(D^2\Phi_n(X))	& = \sum_{j=0}^{n-2} \sum_{k=0}^{n-2} (j+1)(n-k-1) A_{j,k} X_1^{j+k} X_2^{2n-j-k-4} \\
		& \qquad + \sum_{l=1}^{n-1} \sum_{i=-1}^{n-3} l(n-i-2) A_{l-1,i+1} X_1^{i+l} X_2^{2n-i-l-4} \\
		& = \sum_{j=0}^{n-2} \sum_{k=0}^{n-2} (j+1)(n-k-1) A_{j,k} X_1^{j+k} X_2^{2n-j-k-4} \\
		& \qquad + \sum_{j=-1}^{n-3} \sum_{k=1}^{n-1} k(n-j-2) A_{k-1,j+1} X_1^{j+k} X_2^{2n-j-k-4}\\
		& = \sum_{j=0}^{n-3} \sum_{k=1}^{n-2} (j+1)(n-k-1) A_{j,k} X_1^{j+k} X_2^{2n-j-k-4} \\
		& \qquad + \sum_{k=0}^{n-2} (n-1)(n-k-1) A_{n-2,k} X_1^{n-2+k} X_2^{n-k-2} \\
		& \qquad + \sum_{j=0}^{n-3} (j+1)(n-1) A_{j,0} X_1^{j} X_2^{2n-j-4} \\	
		& \qquad  + \sum_{j=0}^{n-3} \sum_{k=1}^{n-2} k(n-j-2) A_{k-1,j+1} X_1^{j+k} X_2^{2n-j-k-4} \\
		& \qquad + \sum_{k=1}^{n-1}  k(n-1) A_{k-1,0} X_1^{k-1} X_2^{2n-k-3} \\
		& \qquad + \sum_{j=0}^{n-3} (n-1)(n-j-2) A_{n-2,j+1} X_1^{j+n-1} X_2^{n-j-3} \,.
	\end{align*}
	Owing to \eqref{lf05} and \eqref{lf08}, $A_{l,0}> 0$ and $A_{n-2,l}>0$ for $0\le l\le n-2$, 
	so that the terms in the above identity involving a single sum are non-negative. 
	Therefore, using the symmetry property~\eqref{lf09} and retaining in the last two sums only the terms corresponding to $k=1$ and $j=n-3$, respectively, we get
	\begin{align*}
		2 \det(D^2\Phi_n(X))	& \ge	 \sum_{j=0}^{n-3} \sum_{k=1}^{n-2} \left[ (j+1)(n-k-1) - k(n-j-2) \right] A_{j,k} X_1^{j+k} X_2^{2n-j-k-4} \\
		& \qquad + (n-1) A_{n-2,n-2} X_1^{2n-4} + (n-1) A_{0,0} X_2^{2n-4} \\
		& = \sum_{j=0}^{n-3} \sum_{k=1}^{n-2} (n-1)(j+1-k) A_{j,k} X_1^{j+k} X_2^{2n-j-k-4} \\
		& \qquad + (n-1) A_{n-2,n-2} X_1^{2n-4} + (n-1) A_{0,0} X_2^{2n-4}\,.
	\end{align*}
	Since
	\begin{equation*}
		(n-1)(j+1-k) A_{j,k} = \mu R (n-1)^2 \frac{(n-j-1)(n-k) ( j+1-k )^2}{\alpha_{j+1,n} \alpha_{k,n}} a_{j+1,n} a_{k,n}  \ge 0
	\end{equation*}
	for $0\le j, k\le n-2$ by \eqref{lf01b}, \eqref{lf05}, and \eqref{lf08}, we conclude that
	\begin{equation}
		\det(D^2\Phi_n(X)) \ge (n-1) A_{n-2,n-2} X_1^{2n-4} + (n-1) A_{0,0} X_2^{2n-4}\,, \qquad X\in [0,\infty)^2\,. \label{lf10}
	\end{equation} 
	Since $A_{0,0}>0$ and $A_{n-2,n-2}>0$, we have thus established that, for each $X\in [0,\infty)^2\setminus\{(0,0)\}$, 
	the symmetric matrix $D^2\Phi_n(X)$ has non-negative trace and positive determinant, so that it is positive definite. This proves~(a2).
	
	\medskip
	
	\noindent\textsl{Symmetry of $S_n(X)$}. Let $X\in \mathbb{R}^2$. It follows from \eqref{lf03} that
		\begin{align*}
				[S_n(X)]_{11} & = (1+R) X_1 \partial_1^2 \Phi_n(X) + \mu R X_2 \partial_1 \partial_2 \Phi_n(X)\\ 
				& = (1+R) \sum_{j=1}^{n-1} j(j+1) a_{j+1,n} X_1^j X_2^{n-j-1} + \mu R \sum_{j=0}^{n-2} (j+1)(n-j-1) a_{j+1,n} X_1^j X_2^{n-j-1}\,, \\
				[S_n(X)]_{12} & = RX_1 \partial_1^2\Phi_n(X) + \mu R X_2 \partial_1 \partial_2 \Phi_n(X) \\
				& = R \sum_{j=1}^{n-1} j(j+1) a_{j+1,n} X_1^j X_2^{n-j-1} + \mu R \sum_{j=0}^{n-2} (j+1)(n-j-1) a_{j+1,n} X_1^j X_2^{n-j-1}\,,\\
				[S_n(X)]_{21} & = (1+ R)X_1 \partial_1 \partial_2 \Phi_n(X) + \mu R X_2 \partial_2^2 \Phi_n(X)\\
				& = (1+R) \sum_{j=1}^{n-1} j(n-j) a_{j,n} X_1^{j} X_2^{n-j-1}  + \mu R \sum_{j=0}^{n-2} (n-j)(n-j-1) a_{j,n} X_1^j X_2^{n-j-1}\,,\\ 
				[S_n(X)]_{22} & = R X_1 \partial_1 \partial_2 \Phi_n(X) + \mu R X_2 \partial_2^2 \Phi_n(X)\\
				& = R \sum_{j=1}^{n-1} j(n-j) a_{j,n} X_1^{j} X_2^{n-j-1} + \mu R \sum_{j=0}^{n-2} (n-j)(n-j-1) a_{j,n} X_1^j X_2^{n-j-1}\,.
		\end{align*}
	It then holds
	\begin{equation*}
		[S_n(X)]_{12} = R n(n-1) a_{n,n} X_1^{n-1} + \sum_{j=1}^{n-2} (j+1) \alpha_{j,n} a_{j+1,n} X_1^j X_2^{n-j-1} + \mu R (n-1) a_{1,n} X_2^{n-1}\,.
	\end{equation*}
	Using the recursion formula \eqref{lf04} and the definition  \eqref{lf01b} of $\alpha_{j,n}$, we get  
	\begin{align*}
		[S_n(X)]_{12} & = R(n-1) \frac{n-1+\alpha_{n-1,n}}{\alpha_{n-1,n}} a_{n-1,n} X_1^{n-1} + \sum_{j=1}^{n-2} (n-j)(j+\alpha_{j,n}) a_{j,n} X_1^j X_2^{n-j-1} \\
		& \qquad + \mu R n(n-1) a_{0,n} X_2^{n-1} \\
		& = (1+R) (n-1) a_{n-1,n} X_1^{n-1} + (1+R) \sum_{j=1}^{n-2} j(n-j) a_{j,n} X_1^j X_2^{n-j-1} \\
		& \qquad + \mu R \sum_{j=1}^{n-2} (n-j)(n-j-1) a_{j,n} X_1^j X_2^{n-j-1} + \mu R n(n-1) a_{0,n} X_2^{n-1} \\
		& = [S_n(X)]_{21}\,,
	\end{align*}
	so that $S_n(X)\in \mathbf{Sym}_2(\mathbb{R})$. 
	
	\medskip
	
	\noindent\textsl{Positive definiteness of $S_n(X)$}. Let $X\in [0,\infty)^2$. It readily follows from \eqref{lf05} that  $[S_n(X)]_{11}\geq0$ and~$[S_n(X)]_{22}\geq0,$ hence
	\begin{equation}
		\mathrm{tr}(S_n(X)) \ge 0\,. \label{lf12}
	\end{equation}
	Moreover, \eqref{lf03} and \eqref{lf10} imply that
	\begin{equation}
		\det(S_n(X)) = \det(D^2\Phi_n(X)) \det(M(X)) = \mu R X_1 X_2 \det(D^2\Phi_n(X)) \ge 0\,. \label{lf13}
	\end{equation}
	Consequently, $S_n(X)$ is a positive semidefinite symmetric matrix for each $X\in [0,\infty)^2$. Moreover, if~${X\in (0,\infty)^2}$, then $\det(S_n(X)) >0$ 
	by \eqref{lf10} and \eqref{lf13}, so that $S_n(X)\in \mathbf{SPD}_2(\mathbb{R})$. This completes the proof of (b). 
\end{proof}

We next derive lower and upper bounds for $\Phi_n$, $n\geq 2$.

\begin{lemma}\label{lelfb}
	Given $n\ge 2$, we have
	\begin{equation}\label{LUB}
		\nu_n X_1^n + (X_1+X_2)^n \le \Phi_n(X) \le \frac{\left[ (1+R) X_1 + R X_2 \right]^n}{R^n}\,, \qquad X\in [0,\infty)^2\,,
	\end{equation}
where $\nu_n$ is the positive number defined by
\begin{equation*}
	\nu_n :=\exp\left\{(n-1) \left[ (1+ R\max\{1,\mu\}) \ln\Big(1+\frac{1}{R\max\{1,\mu\}}\Big) - 1 \right]\right\}-1 >0\,.
\end{equation*}
\end{lemma}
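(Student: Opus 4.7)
The plan is to reduce both inequalities to coefficient-by-coefficient comparisons in the monomial basis of $\Phi_n$. Expanding the right-hand side of the claimed upper bound binomially gives
\begin{equation*}
\frac{((1+R)X_1 + RX_2)^n}{R^n} = \sum_{j=0}^n \binom{n}{j}\left(\frac{1+R}{R}\right)^j X_1^j X_2^{n-j},
\end{equation*}
so, since $X \in [0,\infty)^2$, it suffices to show $a_{j,n} \le \binom{n}{j}((1+R)/R)^j$ for each $0 \le j \le n$. By the product formula in \eqref{p4c}, this reduces to $\prod_{k=0}^{j-1}(1 + k/\alpha_{k,n}) \le ((1+R)/R)^j$, which is immediate from the estimate $\alpha_{k,n} = R[k + \mu(n-k-1)] \ge Rk$ valid for $0 \le k \le n-1$, since then each factor is at most $1 + 1/R$.

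For the lower bound, I first note that $\alpha_{k,n} > 0$ forces every factor in the product defining $a_{j,n}$ to be $\ge 1$, so $a_{j,n} \ge \binom{n}{j}$ for all $j$. Consequently
\begin{equation*}
\Phi_n(X) - (X_1 + X_2)^n = \sum_{j=0}^n \left( a_{j,n} - \binom{n}{j} \right) X_1^j X_2^{n-j}
\end{equation*}
is a non-negative combination of monomials on $[0,\infty)^2$, and keeping only the $j = n$ contribution yields $\Phi_n(X) - (X_1 + X_2)^n \ge (a_{n,n} - 1) X_1^n$. The claim therefore reduces to the single scalar inequality $a_{n,n} - 1 \ge \nu_n$.

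To establish this last inequality, I will bound $a_{n,n} = \prod_{k=0}^{n-1}(1 + k/\alpha_{k,n})$ from below using the elementary upper estimate
\begin{equation*}
\alpha_{k,n} \le R\max\{1,\mu\}(n-1) =: A, \qquad 0 \le k \le n-1,
\end{equation*}
which follows by splitting into the cases $\mu \ge 1$ and $\mu \le 1$ in \eqref{lf01b}. Taking logarithms gives $\ln a_{n,n} \ge \sum_{k=1}^{n-1}\ln(1 + k/A)$, and the monotonicity of $t \mapsto \ln(1 + t/A)$ yields the Riemann-sum comparison
\begin{equation*}
\sum_{k=1}^{n-1} \ln(1 + k/A) \ge \int_0^{n-1} \ln(1 + t/A)\, dt.
\end{equation*}
Evaluating the integral via the substitution $u = 1 + t/A$ and simplifying using $A = y(n-1)$ with $y := R\max\{1,\mu\}$ produces exactly $(n-1)\left[(1+y)\ln(1+1/y) - 1\right]$, so exponentiating yields $a_{n,n} \ge \nu_n + 1$.

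The only nontrivial step is recognizing the correct integral comparison and verifying that the antiderivative matches the precise form of $\nu_n$; the two-sided estimates on $\alpha_{k,n}$ are immediate from \eqref{lf01b}, and the strict positivity $\nu_n > 0$ follows from the elementary fact that $(1+y)\ln(1+1/y) > 1$ for every $y > 0$.
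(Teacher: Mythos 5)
Your proof is correct and follows essentially the same route as the paper: the upper bound via the per-factor estimate $1+k/\alpha_{k,n}\le (1+R)/R$ (you get it from $\alpha_{k,n}\ge Rk$, the paper from the monotonicity of an auxiliary function $\chi$, but the bound is identical), and the lower bound via $a_{j,n}\ge\binom{n}{j}$ together with the refined estimate $a_{n,n}\ge 1+\nu_n$ obtained from the same Riemann-sum comparison with $\int_0^1\ln(1+x/(R\max\{1,\mu\}))\,\mathrm{d}x$. All steps check out, including the two-sided bounds on $\alpha_{k,n}$ and the evaluation of the integral.
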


\begin{proof}
On the one hand, since the function
\begin{equation*}
	\chi (z) := \frac{\mu R + [1+R(1-\mu)] z}{\mu R + R(1-\mu) z}\,, \qquad z\in [0,1]\,,
	\end{equation*}
is increasing, we deduce from \eqref{lf01} that, for $1\le j \le n$, 
\begin{equation*}
	a_{j,n} = \binom{n}{j} \prod_{k=0}^{j-1} \chi\left( \frac{k}{n-1} \right) \le \binom{n}{j} [\chi(1)]^j = \left( \frac{1+R}{R} \right)^j \binom{n}{j}\,.
\end{equation*} 
The upper bound in \eqref{LUB} is then a straightforward consequence of the above inequality.

On the other hand, in order to estimate $\Phi_n(X)$, $X\in[0,\infty)^2$, from below we infer from \eqref{lf01a} that 
\begin{equation*}
	a_{j,n} \ge \binom{n}{j}\,, \qquad 0 \le j \le n-1\,.
\end{equation*}
	\ When estimating the coefficient $a_{n,n}$ from below we need to be more subtle and proceed as follows:
	\begin{align*}
		a_{n,n} & =  \prod_{k=0}^{n-1} \frac{k +\alpha_{k,n}}{\alpha_{k,n}}=\prod_{k=1}^{n-1} \left( 1 + \frac{k}{R [ k + \mu(n-k-1)]} \right)\\[1ex]
		& \geq \prod_{k=1}^{n-1} \left( 1 + \frac{k}{R\max\{1,\mu\}(n-1)} \right) \,.
	\end{align*}
	Now,
	\begin{align*}
		\ln\left( \prod_{k=1}^{n-1} \left( 1 + \frac{k}{R\max\{1,\mu\}(n-1)}\right) \right) & = \sum_{k=1}^{n-1} \ln\left( 1 + \frac{k}{R\max\{1,\mu\}(n-1)} \right) \\
		& \ge (n-1) \sum_{k=1}^{n-1} \int_{(k-1)/(n-1)}^{k/(n-1)} \ln\left( 1+\frac{x}{R\max\{1,\mu\}} \right)\ \mathrm{d}x \\
		& = (n-1) \int_0^1 \ln\left( 1+\frac{x}{R\max\{1,\mu\}} \right)\ \mathrm{d}x \\[1ex]
		& = (n-1) \left[ (1+ R\max\{1,\mu\}) \ln\Big(1+\frac{1}{R\max\{1,\mu\}}\Big) - 1 \right] 
	\end{align*}
	and, taking into account that 
	\begin{equation*}
	(1+x)\ln\Big(1+\frac{1}{x}\Big)>1 \quad\text{for $ x>0$}\,,
	\end{equation*}
	we end up with
	\begin{equation*}
		a_{n,n} \ge \exp\left\{(n-1) \left[ (1+ R\max\{1,\mu\}) \ln\Big(1+\frac{1}{R\max\{1,\mu\}}\Big) - 1 \right]\right\}= 1 + \nu_n\,.		
	\end{equation*}
	We thus have
	\begin{align*}
		\Phi_n(X) &\ge \nu_n X_1^n + \sum_{j=0}^{n} \binom{n}{j} X_1^j X_2^{n-j} =\nu_n X_1^n+   (X_1+X_2)^{n}\,,
	\end{align*}
	and the proof is complete. 
\end{proof}

\section{An auxiliary elliptic system}\label{secapB}

In this appendix, we establish Lemma~\ref{lem.ap1}, which is an important argument in the proof of Lemma~\ref{lem.ex2}. 
Let $\tau>0$,
 $B=(b_{jk})_{1\le j,k\le 2}\in C(\mathbb{R}^2,\mathbf{M}_2(\mathbb{R}))$, and~${A=(a_{jk})_{1\le j,k\le 2}\in \mathbf{SPD}_2(\mathbb{R})}$ 
satisfy~${AB(X)\in \mathbf{SPD}_2(\mathbb{R})}$ for all $X\in \mathbb{R}^2$ and assume that there is $\delta_1>0$ with the property
\begin{equation}
	\langle AB(X)\xi,\xi \rangle \ge \delta_1 |\xi|^2\,, \qquad (X,\xi)\in \mathbb{R}^2\times\mathbb{R}^2\,. \label{ap1}
\end{equation}
Since $A\in \mathbf{SPD}_2(\mathbb{R})$, there is also $\delta_2>0$ such that
\begin{equation}
	\langle A\xi,\xi \rangle \ge \delta_2 |\xi|^2\,, \qquad \xi\in\mathbb{R}^2\,. \label{ap2}
\end{equation}
Here, $\Omega$ is a bounded interval of $\mathbb{R}$ ($d = 1$) and we recall that, in that specific case, $H^1(\Omega)$ embeds continuously in $L_\infty(\Omega)$, so that there is $\Lambda>0$ with
\begin{equation}
	\| z\|_{\infty} \le \Lambda \|z\|_{H^1}\,, \qquad z\in H^1(\Omega)\,. \label{EHL}
\end{equation}

\begin{lemma}\label{lem.ap1}
	Given $U\in L_2(\Omega,\mathbb{R}^2)$, there is a solution $u\in H^1(\Omega,\mathbb{R}^2)$ to the nonlinear elliptic equation
	\begin{equation}
	\int_\Omega \left[ \langle u , v \rangle + \tau \langle B(u) \partial_x u , \partial_x v \rangle \right]\ \mathrm{d}x = \int_\Omega \langle U , v \rangle\ \mathrm{d}x\,, \qquad v\in H^1(\Omega,\mathbb{R}^2)\,. \label{ap3}
\end{equation}
Moreover, if 
\begin{equation}
	\begin{split}
		b_{11}(X) \geq  b_{12}(X) & = 0\,, \qquad X\in (-\infty,0)\times \mathbb{R}\,, \\
		b_{22}(X) \geq b_{21}(X) & = 0\,, \qquad X\in \mathbb{R}\times (-\infty,0)\,,
	\end{split} \label{ap10}
\end{equation}
and $U(x)\in [0,\infty)^2$ for a.a. $x\in\Omega$, then $u(x)\in [0,\infty)^2$ for a.a. $x\in\Omega$.
\end{lemma}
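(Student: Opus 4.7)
The plan is to combine the ``symmetrization'' trick of multiplying the equation by the constant SPD matrix $A$ with a Schauder fixed point argument that exploits the one-dimensional compact Sobolev embedding~\eqref{EHL}. Since $A$ is symmetric and invertible, substituting $Av$ for $v$ in \eqref{ap3} yields the equivalent formulation
\[
\int_\Omega\left[\langle Au,v\rangle + \tau \langle AB(u)\partial_x u,\partial_x v\rangle\right]\,\mathrm{d}x = \int_\Omega\langle AU,v\rangle\,\mathrm{d}x, \qquad v\in H^1(\Omega,\mathbb{R}^2),
\]
in which, for any frozen coefficient $w\in L_\infty(\Omega,\mathbb{R}^2)$, the associated bilinear form is continuous on $H^1(\Omega,\mathbb{R}^2)$ and coercive thanks to~\eqref{ap1}-\eqref{ap2}.

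This leads me to define a fixed-point map $T$ as follows: given $w\in L_\infty(\Omega,\mathbb{R}^2)$, the Lax-Milgram theorem applied to the linear problem with frozen coefficient matrix $AB(w)$ produces a unique $u\in H^1(\Omega,\mathbb{R}^2)$, and testing with $u$ itself yields a bound $\|u\|_{H^1}\le R_0$ with $R_0=R_0(\tau,U,A,\delta_1,\delta_2)$ independent of $w$. Setting $T(w):=u$, I would restrict $T$ to the closed convex ball $K:=\{w\in H^1(\Omega,\mathbb{R}^2) : \|w\|_{H^1}\le R_0\}$, which is compact in $L_\infty(\Omega,\mathbb{R}^2)$ by~\eqref{EHL} together with the compactness of the embedding $H^1(\Omega)\hookrightarrow L_\infty(\Omega)$ in one space dimension. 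Then $T(K)\subset K$ and Schauder's theorem would deliver a fixed point $u=T(u)$, which is the desired solution of~\eqref{ap3}.

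The main obstacle is verifying that $T\colon K\to K$ is continuous in the $L_\infty$-topology. The argument I have in mind is: if $w_n\to w$ in $L_\infty(\Omega,\mathbb{R}^2)$, uniform continuity of $B$ on the compact ball $\overline{B}_{\mathbb{R}^2}(0,\Lambda R_0)$ gives $B(w_n)\to B(w)$ uniformly on $\Omega$; since $(T(w_n))_n$ is bounded in $H^1(\Omega,\mathbb{R}^2)$, a subsequence converges strongly in $L_\infty(\Omega,\mathbb{R}^2)$ and weakly in $H^1(\Omega,\mathbb{R}^2)$ to some limit $u_\infty$, and passing to the limit in the linear equation using the uniform convergence of the coefficients identifies $u_\infty=T(w)$. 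Uniqueness of the Lax-Milgram solution then rules out subsequence extraction and gives $T(w_n)\to T(w)$ in $L_\infty$ along the full sequence.

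Finally, for the positivity statement under~\eqref{ap10}, I would test~\eqref{ap3} with the classical negative-part choice $v=(-u_1^-,0)\in H^1(\Omega,\mathbb{R}^2)$, where $u_1^-:=\max\{-u_1,0\}$. On the set $\{u_1<0\}$ we have $u(x)\in (-\infty,0)\times\mathbb{R}$, so~\eqref{ap10} forces $b_{12}(u)=0$ and $b_{11}(u)\ge 0$; since $\partial_x v_1=\chi_{\{u_1<0\}}\partial_x u_1$ a.e., the diffusion contribution reduces to $\tau\int_{\{u_1<0\}}b_{11}(u)|\partial_x u_1|^2\,\mathrm{d}x\ge 0$, and $\int_\Omega U_1(-u_1^-)\,\mathrm{d}x\le 0$ owing to $U_1\ge 0$. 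Together these force $\|u_1^-\|_2^2\le 0$, hence $u_1\ge 0$ a.e., and an analogous argument with $v=(0,-u_2^-)$ and the second line of~\eqref{ap10} yields $u_2\ge 0$ a.e.
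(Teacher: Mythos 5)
Your proposal is correct and follows essentially the same route as the paper: symmetrization by the constant matrix $A$, Lax--Milgram for the frozen-coefficient problem, a Schauder fixed point on a ball made compact by the one-dimensional embedding of $H^1(\Omega)$, and a negative-part test exploiting \eqref{ap10} for non-negativity. The only cosmetic differences are that the paper runs the fixed-point argument in the $L_2$- rather than the $L_\infty$-topology and tests with both negative parts simultaneously instead of componentwise.
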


\begin{proof}
To set up a fixed point scheme, we define $\delta_0:=\min\{\tau \delta_1, \delta_2\}$ and introduce the compact and convex subset $\mathcal{K}$ of $L_2(\Omega,\mathbb{R}^2)$ defined by
\begin{equation}
	\mathcal{K} := \left\{ u\in H^1(\Omega,\mathbb{R}^2)\ :\ \|u\|_{H^1} \le \frac{\|AU\|_2}{\delta_0}\right\}\,, \label{ap100}
\end{equation}		
the compactness of $\mathcal{K}$ being a straightforward consequence of the compactness of the embedding of~${H^1(\Omega,\mathbb{R}^2)}$ in $L_2(\Omega,\mathbb{R}^2)$.
 According to \eqref{EHL}, 
\begin{equation}
	\|u\|_\infty \le \frac{\Lambda\|AU\|_2}{\delta_0}\,, \qquad u\in\mathcal{K}\,. \label{BU}
\end{equation}
	
We now consider $u\in \mathcal{K}$ and define a  bilinear form $b_u$ on~${H^1(\Omega,\mathbb{R}^2)}$ by
	\begin{equation*}
		b_u(v,w) := \int_\Omega \left[ \langle Av , w \rangle + \tau \langle AB(u) \partial_x v , \partial_x w \rangle \right]\ \mathrm{d}x\,, \qquad (v,w)\in H^1(\Omega,\mathbb{R}^2)\times H^1(\Omega,\mathbb{R}^2) \,.
	\end{equation*} 
Owing to \eqref{ap1} and \eqref{ap2},
\begin{equation}
	b_u(v,v) \ge \delta_0 \|v\|_{H^1}^2\,, \qquad v\in H^1(\Omega,\mathbb{R}^2)\,, \label{ap4}
\end{equation}
while the  continuity of $B$ and the boundedness \eqref{BU} of $u$ guarantee  that
\begin{equation*}
	|b_u(v,w)| \le b_u^* \|v\|_{H^1} \|w\|_{H^1}\,, \qquad (v,w)\in H^1(\Omega,\mathbb{R}^2)\times H^1(\Omega,\mathbb{R}^2) \,,
\end{equation*}
where
\begin{equation*}
	b_u^* := 2\max_{1\le j,k\le 2}\{|a_{jk}|\} \left( 1 + 2\tau \max_{1\le j,k\le 2}\{\| b_{jk}(u) \|_\infty\} \right)\,.
\end{equation*}
We then infer from Lax-Milgram's theorem that there is a unique $\mathcal{V}[u]\in H^1(\Omega,\mathbb{R}^2)$ such that
\begin{equation}
	b_u(\mathcal{V}[u],w) = \int_\Omega \langle AU , w \rangle\ \mathrm{d}x\,, \qquad w\in H^1(\Omega,\mathbb{R}^2)\,. \label{ap5}
\end{equation}
In particular, taking $w=\mathcal{V}[u]$ in \eqref{ap5} and using \eqref{ap4} and H\"older's inequality give
\begin{equation*}
	\delta_0 \|\mathcal{V}[u]\|_{H^1}^2 \le b_u(\mathcal{V}[u],\mathcal{V}[u]) \le \|AU\|_2 \|\mathcal{V}[u]\|_2 \le \|AU\|_2 \|\mathcal{V}[u]\|_{H^1}\,.
\end{equation*}
 Consequently, 
\begin{equation}
	\|\mathcal{V}[u]\|_{H^1} \le \frac{\|AU\|_2}{\delta_0}  \;\;\text{ and }\;\; \mathcal{V}[u]\in\mathcal{K} \,. \label{ap6}
\end{equation}

We now claim that the map $\mathcal{V}$ is continuous  on $\mathcal{K}$ with respect to the norm-topology of $L_2(\Omega,\mathbb{R}^2)$. 
Indeed, consider a sequence  $(u_j)_{j\ge 1}$ in  $\mathcal{K}$ and $u\in \mathcal{K}$  such that 
\begin{equation*}
	\lim_{j\to\infty} \|u_j-u\|_2=0\,.
\end{equation*}
Upon extracting a subsequence (not relabeled), we may assume that
\begin{equation*}
	\lim_{j\to\infty} u_j(x) = u(x) \;\text{ for a.a. }\; x\in \Omega\,,
\end{equation*}
so that the continuity of $B$ and \eqref{BU} ensure that
\begin{subequations}\label{ap7}
\begin{equation}
	\lim_{j\to\infty} B(u_j(x)) = B(u(x))  \;\text{ for a.a. }\; x\in \Omega \label{ap7.1}
\end{equation}
and
\begin{equation}
	\max\left\{ \|B(u)\|_\infty , \sup_{j\ge 1}\{\|B(u_j)\|_\infty\} \right\} \le \max_{|X|\le \Lambda \|AU\|_2/\delta_0}\{|B(X)|\}\,. \label{ap7.2}
\end{equation}
\end{subequations}
It also follows from \eqref{ap6} and the compactness of the embedding of $H^1(\Omega,\mathbb{R}^2)$ in $L_2(\Omega,\mathbb{R}^2)$ that there is $v\in H^1(\Omega,\mathbb{R}^2)$ such that, after possibly extracting a further subsequence, 
\begin{equation}
	\lim_{j\to\infty} \|\mathcal{V}[u_j] - v  \|_2 = 0 \qquad\text{ and }\qquad \mathcal{V}[u_j] \rightharpoonup v \;\text{ in }\; H^1(\Omega,\mathbb{R}^2)\,. \label{ap8}
\end{equation}
 Since 
\begin{equation*}	
	\int_\Omega \langle AB(u_j)\partial_x \mathcal{V}[u_j] , \partial_x w \rangle\ \mathrm{d}x = \int_\Omega \langle \partial_x \mathcal{V}[u_j], AB(u_j)\partial_x w \rangle\ \mathrm{d}x\,,
\end{equation*}
due to the symmetry of $AB(X)$ for $X\in\mathbb{R}^2$, it  readily follows from \eqref{ap7}, \eqref{ap8},  and Lebesgue's dominated convergence theorem that we may pass to the limit $j\to\infty$ in the variational identity~\eqref{ap5} for $\mathcal{V}[u_j]$ and conclude that
\begin{equation*}
	b_u(v,w) = \int_\Omega \langle AU , w \rangle\ \mathrm{d}x\,, \qquad w\in H^1(\Omega,\mathbb{R}^2)\,,
\end{equation*}
that is, $v=\mathcal{V}[u]$. We have thus shown that any subsequence of  $(\mathcal{V}[u_j])_{j\ge 1}$ has a  subsequence that converges to $\mathcal{V}[u]$, which proves the claimed continuity of the map $\mathcal{V}$.  We are therefore in a position to apply Schauder's fixed point theorem, see \cite[Theorem~11.1]{GT2001} for instance, and conclude that the map $\mathcal{V}$ has a fixed point $u\in \mathcal{K}$.
 In particular, the function~$u$ satisfies
\begin{equation*}
	b_u(u,w) = \int_\Omega \langle AU , w \rangle\ \mathrm{d}x\,, \qquad w\in H^1(\Omega,\mathbb{R}^2)\,.
\end{equation*}
Now, given $v\in H^1(\Omega,\mathbb{R}^2)$, the function $w=A^{-1}v$ also belongs to $H^1(\Omega,\mathbb{R}^2)$ and we infer from the above identity and the symmetry of $A$ that
\begin{align*}
	\int_\Omega \langle U , v \rangle\ \mathrm{d}x & = \int_\Omega \langle AU , w \rangle\ \mathrm{d}x = b_u(u,w) = b_u(u,A^{-1}v) \\
	&  = \int_\Omega \left[ \langle u , v \rangle + \tau \langle B(u) \partial_x u , \partial_x v \rangle \right]\ \mathrm{d}x \,.
\end{align*}
We have thus constructed a   solution $u\in H^1(\Omega,\mathbb{R}^2)$ to \eqref{ap3}.

We now turn to the non-negativity-preserving property and assume that $U(x)\in [0,\infty)^2$ for a.a.~${x\in\Omega}$. 
Let $u\in H^1(\Omega,\mathbb{R}^2)$ be a   solution to \eqref{ap3} and set~$\varphi:=-u$. Then $(\varphi_{1,+},\varphi_{2,+})$ belongs to $H^1(\Omega,\mathbb{R}^2)$ and it follows from \eqref{ap3} that
\begin{align}
	& \int_\Omega \left( \varphi_1 \varphi_{1,+}  + \varphi_2 \varphi_{2,+} + \tau \sum_{j,k=1}^2 b_{jk}(u) \partial_x \varphi_k \partial_x (\varphi_{j,+}) \right)\ \mathrm{d}x \nonumber \\
	& \hspace{5cm} = - \int_\Omega \left( U_1 \varphi_{1,+}  + U_2 \varphi_{2,+} \right)\ \mathrm{d}x \le 0\,. \label{ap11}
\end{align}
We now infer from \eqref{ap10} that 
\begin{align*}
	b_{11}(u) \partial_x\varphi_1 \partial_x\varphi_{1,+} & = b_{11}(u) \mathbf{1}_{(-\infty,0)}(u_1) |\partial_x u_1|^2 \ge 0\,, \\
	b_{12}(u) \partial_x\varphi_2 \partial_x\varphi_{1,+} & = b_{12}(u) \mathbf{1}_{(-\infty,0)}(u_1) \partial_x u_1 \partial_x u_2 = 0\,, \\
	b_{21}(u) \partial_x\varphi_1 \partial_x\varphi_{2,+} & = b_{21}(u) \mathbf{1}_{(-\infty,0)}(u_2) \partial_x u_1 \partial_x u_2 = 0\,, \\
	b_{22}(u) \partial_x\varphi_2 \partial_x\varphi_{2,+} & = b_{22}(u) \mathbf{1}_{(-\infty,0)}(u_2) |\partial_x u_2|^2 \ge 0\,,
\end{align*}
so that the second term on the left-hand side of \eqref{ap11} is non-negative. Consequently, \eqref{ap11} gives
\begin{equation*}
	\int_\Omega \left( |\varphi_{1,+}|^2  + |\varphi_{2,+}|^2 \right)\ \mathrm{d}x \le 0\,, 
\end{equation*}
which implies that $\varphi_{1,+}=\varphi_{2,+}=0$ a.e. in $\Omega$. Hence, $u(x)\in [0,\infty)^2$ for a.a. $x\in\Omega$ and the proof of Lemma~\ref{lem.ap1} is complete.
\end{proof}

\bibliographystyle{siam}
\bibliography{UnifboundTFMP}

\end{document}